\documentclass[11pt]{amsart}
\usepackage{amsmath,amscd,amssymb,amsfonts,amsthm,hyperref}
\usepackage{tcolorbox}
\usepackage{tikz-cd}
\hypersetup{colorlinks,   linkcolor={red!50!black},    citecolor={blue!70!black},   urlcolor={blue!80!black}}
\usepackage[cmtip,matrix,arrow]{xy}
\usepackage{mathabx}
\usepackage{graphicx}
\usepackage{tikz-cd}
\usepackage{MnSymbol}

\newtheorem{theorem}{Theorem}[section]
\newtheorem{corollary}[theorem]{Corollary}
\newtheorem{proposition}[theorem]{Proposition}

\newtheorem{lemma}[theorem]{Lemma}
\theoremstyle{definition}    

\theoremstyle{remark}

\newtheorem{remark}[theorem]{Remark}

\newcommand{\Ra}{\Rightarrow}

\newcommand{\T}{\mathbb{T}}

\newcommand{\ca}{\mathcal}

\newcommand{\F}{\mathcal{F}}
\newcommand{\N}{\mathbb{N}}
\newcommand{\R}{\mathbb{R}}
\newcommand{\C}{\mathbb{C}}

\newcommand{\Z}{\mathbb{Z}}
\newcommand{\Q}{\ca{Q}}
\newcommand\pt{\on{pt}}

\newcommand{\on}{\operatorname}

\renewcommand{\ker}{ \on{ker}}

\newcommand\qu{/\kern-.7ex/} 

\newcommand{\hra}{\hookrightarrow}

\renewcommand{\d}{{\mbox{d}}}
\newcommand{\ol}{\overline}

\newcommand{\eeq}{\end{eqnarray*}}
\newcommand{\beq}{\begin{eqnarray*}}

\newcommand{\pr}{\on{pr}}

\newcommand{\CC}{\mathsf{C}}

\renewcommand{\subset}{\subseteq}

\begin{document}
\sloppy
\title{A note on the cohomology of pullback Lie algebroids}
\author{Eckhard Meinrenken}

\begin{abstract}		
We give a direct differential-geometric proof of a result of Crainic on the cohomology of pullbacks of Lie algebroids under surjective submersions: 
If the fibers are connected and have vanishing cohomology through
degree $n$, pullback with coefficients in any representation is an
isomorphism through degree $n$ and is injective in degree $n+1$.
The proof reduces  to a  vanishing result for leafwise cohomology and uses an exhaustion-and-patching argument due to Buchdahl. No local triviality or finite-type assumption on the fibers is required. We also prove the analogous result for holomorphic Lie algebroids.
\end{abstract}

\maketitle

\section{Introduction}

Let $\pi\colon M\to N$ be a surjective submersion, and let
$A\Ra  N$ be a Lie algebroid. A fundamental result used in the proof of the van Est theorem for Lie groupoids \cite{cra:dif} 
asserts that, if the
fibers of $\pi$ are connected and have vanishing cohomology
in degrees $1,\ldots,n$, then pullback 
\[
\pi^*\colon H^k(A)\longrightarrow H^k(\pi^!A)
\]
is an isomorphism for $k\leq n$ and is injective in degree $n+1$.
More generally, the result holds with coefficients in a
representation.

This result was proved by Crainic~\cite[Theorem~2]{cra:dif} using a
sheaf-theoretic acyclicity criterion of Bernstein and
Lunts~\cite{ber:equ}. It is a key ingredient in his proof of
the van Est theorem, as well as in many subsequent variants of van Est
theory. The argument, however, is somewhat difficult to extract from
the cited sheaf-theoretic machinery, whose hypotheses and terminology
are not immediately transparent in the context of smooth
submersions.

The purpose of this note is to present a direct differential-geometric
proof. The main ingredient is an exhaustion-and-patching argument due
to Buchdahl \cite{buc:rel}, which we adapt to the present setting.
Our proof uses only standard tools from differential topology and de
Rham theory, at the level of Bott and Tu \cite{bo:di}. Besides making
the argument self-contained, this approach displays  the
main difficulties: a general surjective submersion need not be a locally
trivial fibration, and its fibers need not be of finite type. We also prove the corresponding result for holomorphic 
Lie algebroids and holomorphic submersions. 

\bigskip
\paragraph{\bf Acknowledgements.}
I thank Marius Crainic for discussions on the sidelines of the June
2025 Lisbon conference in honour of Rui Fernandes. I am also grateful to
Maarten Mol for patient explanations of the Bernstein--Lunts argument
and for pointing out Buchdahl's closely related result, which became
the starting point for this note.

\section{Pullback Lie algebroids}
All manifolds considered in this note are paracompact and Hausdorff. 
Let $A\Ra N$ be a Lie algebroid with anchor $\varrho\colon A\to TN$. The Lie algebroid cohomology of $A$ with coefficients in an $A$-representation 
$E\to N$ is denoted $H^\bullet(A,E)$. Given a smooth map $f\colon M\to N$ transverse to the anchor, one defines 
the pullback Lie algebroid $f^!A\Ra M$ (using standard notation as in \cite[Section 13.4]{cra:lec}) as the fiber product 
\[ f^!A=TM\times_{TN} A,\]
with Lie algebroid structure as a subalgebroid of $TM\times A$. Projection to the second factor gives a Lie algebroid morphism $f^!A\to A$. 
Given a representation of $A$ on a vector bundle $E\to N$, one obtains a representation of 
$f^!A$ on the (usual) pullback bundle $f^*E$. 

In this note, we only consider the case that $f$ is given by a surjective submersion 
\[\pi\colon M\to N.\]
In this case, transversality is automatic, and the Lie algebroid morphism $\pi^!A\to A$ is fiberwise surjective,  with kernel
$T_\pi M=\ker(T\pi)$ the tangent bundle to the foliation defined by $\pi$. Sections of $\wedge T_\pi^* M$ are called foliated (or leafwise) 
differential forms; we use the notation $\d_\pi$ for the Lie algebroid differential in order to avoid confusion with the de Rham differential.

\begin{theorem}[Crainic \cite{cra:dif}]\label{th:main}
Let $A\Ra N$ be a Lie algebroid, with a representation on a vector bundle $E\to N$. Suppose 	
$\pi\colon M\to N$ is a surjective submersion with connected fibers, such that the cohomology of the fibers vanishes in degrees $1\le k\le n$. 
Then the pullback map  
\[ \pi^*\colon H^\bullet(A,E)\to H^\bullet(\pi^!A,\pi^*E)\]
is an isomorphism in degrees $0\le k\le n$, and is injective in degree $n+1$. 	
\end{theorem}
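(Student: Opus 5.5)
The plan is to filter the complex $C=\Gamma(\wedge(\pi^!A)^*\otimes\pi^*E)$ by "form degree along $A$". Everything then reduces to a vanishing statement for foliated (leafwise) cohomology with coefficients, plus a standard spectral-sequence or zig-zag argument.

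\textbf{The filtration.} The exact sequence
\[ 0\to T_\pi M\to \pi^!A\to \pi^*A\to 0 \]
of vector bundles gives a decreasing filtration
\[ F^pC^k=\{\omega\in C^k:\ \iota_{v_0}\cdots\iota_{v_{k-p}}\omega=0 \ \text{for all } v_i\in\Gamma(T_\pi M)\}. \]
It is preserved by the differential, since $T_\pi M$ is an ideal in $\pi^!A$: $T_\pi M$ is the kernel of the Lie algebroid morphism $\pi^!A\to A$. The associated graded pieces are
\[ \mathrm{gr}^pC=\Gamma(\wedge^{\bullet-p}T_\pi^*M\otimes \pi^*(\wedge^pA^*\otimes E)), \]
and the induced differential is $\d_\pi$. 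Here $\pi^*(\wedge^pA^*\otimes E)$ carries its canonical flat $T_\pi M$-connection, in which pullback sections are flat. This is the foliated de Rham complex with coefficients in a pullback bundle.

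\textbf{Key lemma.} Let $V\to N$ be any vector bundle, and suppose the fibers of $\pi$ are connected and have $H^k=0$ for $1\le k\le n$. Then:
\begin{itemize}
\item $H^0(\Gamma(\wedge T_\pi^*M\otimes\pi^*V),\d_\pi)=\pi^*\Gamma(V)$;
\item $H^k=0$ for $1\le k\le n$.
\end{itemize}
Granting the lemma, the spectral sequence of the filtration has
\[ E_1^{p,q}=0 \quad\text{for } 1\le q\le n, \qquad E_1^{p,0}=\Gamma(\wedge^pA^*\otimes E). \]
The $d_1$ on $E_1^{\bullet,0}$ is the $A$-differential, so $E_2^{p,0}=H^p(A,E)$. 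Since the filtration is bounded in each degree, convergence is no issue. A gap argument then shows that the edge map, which is exactly $\pi^*$, is an isomorphism for $k\le n$ and injective for $k=n+1$. In degree $n+1$ the groups $E_2^{0,n+1}$ can survive, while nothing can hit $E^{n+1,0}$: the only possible sources are $d_r$ from $E_r^{n+1-r,r-1}$ with $r\ge2$, and these vanish because $1\le r-1\le n$.

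To avoid the spectral sequence, I would instead do the direct zig-zag. Take a $\d$-closed $\omega$ of degree $k\le n+1$ on $M$ and kill its lowest-filtration (top leafwise degree) components one at a time, using leafwise exactness in degrees $1,\dots,n$. This is the same argument written out by hand.

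\textbf{Proof of the key lemma.} The degree-$0$ statement is immediate from connectedness of the fibers. Locally on $N$ we may trivialize $V$, so the coefficients reduce to functions, and the lemma becomes a statement about leafwise forms. This is where the real difficulty lies, since $\pi$ is not locally trivial and the fibers need not be of finite type. Two steps are needed.

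\textbf{Step (a): local solvability.} For a closed leafwise $k$-form $\alpha$ ($1\le k\le n$) on $\pi^{-1}(U)$, we need a leafwise primitive, at least over a smaller $U'\ni x$ and on a relatively compact part of the fibers. I would first try a parametrized version of the de Rham argument.
\begin{itemize}
\item Given a compact $K\subset\pi^{-1}(x)$, choose a compact neighbourhood $L\supset K$ in the fiber. Use the submersion to get a tubular neighbourhood $W\cong U'\times L^\circ$ (an open-fiber embedding) of $L^\circ$.
\item On the fiber, $\alpha|_{\pi^{-1}(x)}$ is exact. Its primitive restricted to $L$ extends via the product structure to a leafwise form $\beta$ on $W$.
\item The defect $\alpha-\d_\pi\beta$ then vanishes on $\{x\}\times L$. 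To correct it, I would use the parametrized homotopy formula for the family of forms $\alpha_y$ on $L^\circ$. This requires that the cohomology of a neighbourhood of $K$ injects in a controlled way into the cohomology of the fiber.
\end{itemize}
This is subtle. The cleaner route is to use Buchdahl's idea: choose an exhaustion of $\pi^{-1}(U)$ by open sets $W_1\Subset W_2\Subset\cdots$ which are "relatively product-like", and show that for each $j$ the restriction map $H^k_\pi(W_{j+1})\to H^k_\pi(W_j)$ vanishes after shrinking $U$. The input is that each class on a compact piece of a fiber dies in the full fiber, hence already in some larger compact piece.

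\textbf{Step (b): patching.} Primitives $\beta_j$ on $W_j$ must be corrected so that they converge. The differences $\beta_{j+1}-\beta_j$ are closed in degree $k-1$.
\begin{itemize}
\item For $k-1\ge1$, they can be corrected modulo exact forms using the degree-$(k-1)$ statement of the exhaustion result, proved by induction on $k$, so that $\beta_{j+2}$ agrees with $\beta_{j+1}$ on $W_j$.
\item For $k=1$, the differences are functions constant on leaf-components. Connectedness of the fibers, together with the $W_j$ being chosen with fiberwise-connected pieces, lets us subtract pullbacks from $N$.
\end{itemize}
Finally, local solutions over an open cover of $N$ are globalized by a partition of unity pulled back from $N$. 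This works because $\d_\pi(\pi^*\chi\,\beta)=\pi^*\chi\,\d_\pi\beta$. I expect Step (a) together with the Mittag-Leffler-type patching in (b) to be the main obstacle.
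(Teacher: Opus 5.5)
Your algebraic half is fine and is essentially the paper's argument. The paper splits $\pi^!A\cong T_\pi M\oplus\pi^*A$ and runs the zig-zag on $\Omega^q_\pi(M,\pi^*(\wedge^pA^*\otimes E))$, while your Hochschild--Serre filtration needs no splitting. Your key lemma is exactly Corollary~\ref{cor:buchcorollary}. The gap is in the proof of the key lemma, which is where all the content lies.

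\textbf{Step (a) does not yield a fixed base.} You ask for $H^k_\pi(W_{j+1})\to H^k_\pi(W_j)$ to vanish over $\pi^{-1}(U)$ ``after shrinking $U$''. But the product neighbourhood of the slice $W_j\cap\pi^{-1}(x)$ exists only over a neighbourhood $V_j$ of $x$ that depends on $j$. When $\pi$ is not locally trivial, $V_j$ can shrink to $\{x\}$ as $j\to\infty$. So there is no fixed $U$ over which the Mittag-Leffler step (b) can run. Solving over $\pi^{-1}(U)$ is also no easier than solving over $M$, since $\pi^{-1}(U)\to U$ is no more locally trivial than $\pi$.

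The missing idea, which is Lemma~\ref{lem:3.6}, is to apply the partition of unity at each stage of the exhaustion rather than at the end. Since $\overline{U_a}$ is compact, finitely many small $V_i$ cover $\pi(\overline{U_a})$, each with its own $b_i$. Taking $b=\max_i b_i$, the local primitives glue as $\sum_i\pi^*\chi_i\beta_i$. This makes $H^k_\pi(U_b)\to H^k_\pi(U_a)$ zero for an exhaustion of all of $M$, with no shrinking.

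\textbf{Your ``input'' is missing what makes it uniform.} Classes on a piece dying eventually produces a single $b$ only because the pieces have finite-dimensional cohomology. The paper gets this as follows:
\begin{enumerate}
\item It builds the exhaustion from fiberwise good covers, so every slice $U_a\cap\pi^{-1}(x)$ is of finite type.
\item It uses Poincar\'e duality with compact supports to make the fiberwise restriction maps zero.
\item It transfers this to $\pi^{-1}(V)$ via $H^k_\pi(Q\times V)=H^k(Q)\otimes C^\infty(V)$ for $Q$ of finite type. This is applied to product neighbourhoods sandwiched between $U_a\cap\pi^{-1}(V)$ and $U_b\cap\pi^{-1}(V)$.
\end{enumerate}
A ``relatively product-like'' exhaustion, by contrast, is not available in general.

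\textbf{Degree one is not proved.} Your patching there rests on choosing the $W_j$ with connected (and finite-type) slices. You do not justify this, and it need not hold. Near the boundary of $\pi(W_j)$, the slices of a relatively compact open set can break into several components. Differences of primitives are then only fiberwise locally constant and need not be pullbacks from $N$. So the degree-zero Mittag-Leffler issue is unresolved, and $H^1_\pi(M)=0$, which is the whole case $n=1$, is left open.

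The paper avoids exhaustions entirely in this degree. Over $V_i$ with a section $s_i$, the fiberwise primitive $f_i$ normalized by $f_i\circ s_i=0$ exists and is unique, because every whole fiber is connected with vanishing $H^1$. It is smooth by propagating along a chain of submersion charts. Then $\sum_i\pi^*\chi_i f_i$ is a global primitive.
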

As a special case of Theorem \ref{th:main}, taking $A=0_N$ to be the zero Lie algebroid with the trivial representation on $E=N\times \R$, we have that
$\pi^!A=T_\pi M$. The corresponding Lie algebroid cohomology is identified with the leafwise (foliated) cohomology, 
 \[ H^\bullet(T_\pi M)=H^\bullet_\pi(M).\]
One hence obtains: 
\begin{proposition}\label{prop:keycorollary}
Let $\pi\colon M\to N$ be a surjective submersion with connected fibers, such that the cohomology of the fibers vanishes in degrees $1\le k\le n$. 
Then the foliated cohomology $H^\bullet_\pi(M)$ vanishes in degrees  $1\le k\le n$.
\end{proposition}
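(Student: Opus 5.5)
Formally, Proposition~\ref{prop:keycorollary} is the special case $A=0_N$ of Theorem~\ref{th:main}. The abstract says, however, that the proof of the theorem \emph{reduces} to this vanishing result. So I will prove it directly, and then derive the theorem from it, for instance via a spectral sequence or filtration argument for the extension $T_\pi M\to \pi^!A\to A$. The plan is local-to-global: first prove vanishing over small open subsets of $N$, then patch using an exhaustion of $N$ in the style of Buchdahl.

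\emph{Step 1 (local vanishing, with shrinking).} Fix $x\in N$, a closed leafwise form $\alpha\in\Omega^k_\pi(M)$ with $1\le k\le n$, and a compact subset $K$ of the fiber $M_x$. I claim there are a neighborhood $U$ of $x$, an open set $W\subset\pi^{-1}(U)$ containing $K$, and a leafwise form $\beta$ on $W$ with $\d_\pi\beta=\alpha|_W$.

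The argument is as follows. Enlarge $K$ to a compact submanifold with boundary $K'\subset M_x$ such that $\alpha|_{M_x}$ is exact on $M_x$; this holds by hypothesis. Choose a primitive on $M_x$. Using a tubular neighborhood of $K'$ and the submersion charts, identify a neighborhood of $K'$ in $M$ with $U\times K'^{\circ}$, so that $\pi$ becomes the projection. On this product, $\alpha$ is a smooth family $\alpha_y$ of closed forms on $K'^{\circ}$. Since we only need vanishing through degree $k\le n$ on a compact piece, the fiberwise de Rham homotopy operator gives a primitive depending smoothly on $y$. Concretely, use a parametrized Hodge or Poincar\'e-lemma construction relative to a fixed finite good cover of $K'$: the \v{C}ech--de Rham double complex with smooth dependence on $y$ gives smooth primitives. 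Obstructions live in $H^k(K')$. These may be nonzero even though $H^k(M_x)=0$, but they vanish after restricting to a smaller $K\subset K'$, because the class of $\alpha_y$ is constant in $y$ by homotopy invariance on the product. This shrinking is precisely why a non-finite-type fiber requires the exhaustion.

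\emph{Step 2 (exhaustion and patching).} Exhaust each fiber, and hence all of $M$, by an increasing sequence of saturated-in-$N$ compact pieces. Patch the local primitives with a partition of unity pulled back from $N$. Since the partition functions are functions on $N$, they are $\d_\pi$-constant. Consequently $\sum\phi_i\beta_i$ has $\d_\pi$ equal to $\alpha$ exactly, wherever the $\beta_i$ are all defined.

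\emph{Step 3 (the main obstacle).} The primitives are defined only on the shrinking domains $W$, not on full fibers. Domains shrink at every step, and the differences $\beta_i-\beta_j$ are closed only in degree $k-1$. The plan is a Buchdahl/Mittag-Leffler argument, by induction on $k$, using the vanishing in degree $k-1$ (and the connectedness of fibers for $k=1$). On an exhaustion $M_1\subset M_2\subset\cdots$, pick primitives $\beta_m$ on neighborhoods of $M_m$. Then $\beta_{m+1}-\beta_m$ is $\d_\pi$-closed near $M_m$. By the inductive degree-$(k-1)$ statement, which comes with shrinking, this difference is $\d_\pi\gamma_m$ on a slightly smaller set; for $k=1$, it is a function constant on fibers, that is, a pullback from $N$. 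Correct $\beta_{m+1}$ by $\d_\pi\gamma_m$, cut off by a bump function. The resulting sequence stabilizes on each compact set and defines a global primitive. The delicate bookkeeping is that every inductive step costs a shrinking, so the inductive hypothesis must be stated in the ``exact after shrinking a compact set'' form. This also yields injectivity-type statements in degree $n+1$.
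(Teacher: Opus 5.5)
Your architecture matches the paper's: primitives on product neighborhoods $\Phi(Q\times V)$ of compact pieces of a fiber, patching with a partition of unity pulled back from $N$, and a Mittag-Leffler correction by $\d_\pi(\lambda\gamma)$ in degree $k-1$. But the step that carries the argument is missing. In Step 3 the differences $\beta_{m+1}-\beta_m$ are closed forms defined only near $M_m$. They are not restrictions of global forms, so neither $H^{k-1}_\pi(M)=0$ nor your Step 1 applies to them. What you need is the pro-vanishing statement of Lemma~\ref{lem:3.6}: for every $a$ there is $b>a$ such that $H^{k-1}_\pi(U_b)\to H^{k-1}_\pi(U_a)$ is zero. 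In general $b$ must be much larger than $a$, so you pass to a sparse subsequence rather than a ``slightly smaller set.''

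You postulate this ``shrinking form'' of the inductive hypothesis but never prove it, and it does not follow from global vanishing in lower degrees. At bottom it is a statement about a single fiber $F$: if $H^j(F)=0$ and the $F_a$ are finite-type pieces, then $H^j(F_b)\to H^j(F_a)$ vanishes for $b\gg a$. This requires finite-dimensionality. The paper proves it by Poincar\'e duality. Representatives of a basis of the finite-dimensional space $H^{\bullet}_{\on{comp}}(F_a,o_F)$ have compactly supported primitives in $F$, and one takes $F_b$ containing their supports. The paper then spreads this to nearby fibers using $H^j_\pi(Q\times V)=H^j(Q)\otimes C^\infty(V)$, under which restriction is $r\otimes\mathrm{id}$ and vanishes when $r$ does. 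Compactness of $\ol{U}_a$ means only finitely many $V_i$ are needed.

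The same defect appears in your $k=1$ case. A $\d_\pi$-closed function near $M_m$ is only locally constant on the possibly disconnected sets $M_m\cap\pi^{-1}(y)$, so it need not be a pullback from $N$. The paper sidesteps this with local sections $s_i$ and normalized primitives $f_i$ with $f_i\circ s_i=0$.

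The one mechanism you do offer for shrinking is that the class of $\alpha_y$ is constant in $y$ by homotopy invariance. This is false, because $\alpha$ is only leafwise closed: $f(y)\,\d\theta$ on $S^1\times\R$ is $\d_\pi$-closed with fiber class $2\pi f(y)$.

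Moreover, Step 1 needs no shrinking at all. Since $\alpha$ is globally defined, each slice $\Phi(\cdot,y)^*\alpha$ is the pullback of the exact form $\alpha|_{\pi^{-1}(y)}$ under an open embedding $Q\hookrightarrow\pi^{-1}(y)$. So the smooth family of classes in $H^k(Q)$ vanishes pointwise, and Proposition~\ref{prop:product} makes $\alpha$ exact on all of $\Phi(Q\times V)$. Shrinking is needed precisely for the non-global forms in Step 3, and that is where the finite-dimensionality argument has to enter.
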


\begin{remark}
The corresponding statement is false for general foliations $\F$: In particular, $H^1_\F(M)$ may be non-zero even if all leaves are simply connected. For example, let  $M=\R^2/\Z^2$ be the 2-torus, and let $\theta_1,\theta_2\in \Omega^1(M)$ be closed 1-forms pulling back to 
$\d x$ and $\d y+\mu \d x$, where $\mu\not\in\Q$. 
The Kronecker foliation $\mathcal F$ given by
$\ker\theta_2\subseteq TM$ has all leaves diffeomorphic to
$\mathbb R$, and hence $H^1(L)=0$ for every leaf $L$.  The restriction of $\theta_1$ to leaves is a foliated form $\alpha\in \Omega^1_F(M)$. Its class in  $H^1_\F(M)$ is non-zero: For if $\alpha=\d_\F f$ with $f\in C^\infty(M)$, then $\d f-\theta_1$ vanishes on $T_\F M$ and hence is a  function times $\theta_2$. But then  $\int \theta_1\wedge\theta_2=\int (\d f)\wedge\theta_2=\int \d(f\theta_2)=0$, 
which is a contradiction since $\theta_1\wedge\theta_2$ is a volume form. 
\end{remark}

Similar to \cite{cra:dif}, we will prove Proposition \ref{prop:keycorollary} as a first step towards  Theorem \ref{th:main}. To highlight the difficulty, let us point out that the proposition is greatly simplified when $\pi$ is a locally trivial fibration whose fibers are manifolds of finite type. Indeed, this then 
reduces further to the case of direct products, and one may use the following fact:

\begin{proposition}[Products] \label{prop:product}
	Suppose $M=Q\times N$ 	is a product of two manifolds, where $Q$ is of finite type, and let $\pi\colon M\to N$ be the projection to the second factor. 
	Then 
	\[ H_\pi^\bullet(M)=H^\bullet(Q)\otimes C^\infty(N).\]
\end{proposition}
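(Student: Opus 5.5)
The plan is to view the foliated forms on $M=Q\times N$ as smooth families, parametrized by $N$, of differential forms on $Q$. Concretely, $T_\pi M=\pr_Q^*TQ$, so $\Omega^k_\pi(M)=\Gamma(\pr_Q^*\wedge^kT^*Q)$. Its elements are smooth maps $y\mapsto \alpha_y\in\Omega^k(Q)$, and $\d_\pi$ acts as the de Rham differential $\d_Q$ of $Q$ applied for each fixed $y\in N$. With this description, the statement becomes a parametrized de Rham theorem. I will use a good cover of $Q$ together with a parametrized Poincaré lemma and a parametrized \v{C}ech--de Rham (Mayer--Vietoris) double complex argument, following Bott--Tu.

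First I construct the natural map $H^\bullet(Q)\otimes C^\infty(N)\to H^\bullet_\pi(M)$, sending $[\beta]\otimes f\mapsto[\pr_Q^*\beta\cdot \pr_N^*f]$. Since $Q$ is of finite type, $H^\bullet(Q)$ is finite dimensional, so the tensor product is simply $\bigoplus_i [\beta_i]\,C^\infty(N)$ for a basis $[\beta_i]$. Second I prove the parametrized Poincaré lemma: for $U\cong\R^m$, $H^k_\pi(U\times N)=0$ for $k\ge1$ and $H^0_\pi(U\times N)=C^\infty(N)$. The standard homotopy operator $K$ (integration along the radial homotopy) acts only in the $Q$-directions, so it maps smooth families to smooth families and satisfies $\d_\pi K+K\d_\pi=\mathrm{id}-(\text{evaluation at }0)$. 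The same argument applies to $U$ a finite intersection of good-cover sets, since these are again diffeomorphic to $\R^m$. Third I choose a finite good cover $\{U_\alpha\}$ of $Q$, which exists because $Q$ is of finite type, and a partition of unity on $Q$ subordinate to it. I then form the double complex $C^p(\{U_\alpha\times N\},\Omega^q_\pi)$. Its rows are exact, by the usual partition-of-unity argument: the functions pull back from $Q$ and are $\d_\pi$-compatible. Its columns have cohomology $C^p(\{U_\alpha\},\R)\otimes C^\infty(N)$, concentrated in $q=0$, by the second step. Hence $H^\bullet_\pi(M)\cong H^\bullet\big(C^\bullet(\{U_\alpha\},\R)\otimes C^\infty(N)\big)$.

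The last step is to compute this \v{C}ech cohomology. The \v{C}ech complex of the finite nerve with coefficients $\R$ is a finite-dimensional complex $C^\bullet$. Tensoring over $\R$ with the vector space $C^\infty(N)$ is exact, so $H^\bullet(C^\bullet\otimes C^\infty(N))=H^\bullet(C^\bullet)\otimes C^\infty(N)=H^\bullet(Q)\otimes C^\infty(N)$. It then remains to check that the composite isomorphism agrees with the natural map from the first step. This follows from naturality of the zig-zag construction, because the forms $\pr_Q^*\beta$ give the unparametrized \v{C}ech--de Rham comparison tensored with $C^\infty(N)$.

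I expect the main obstacle to be the bookkeeping of smoothness in $N$ in the second step, that is, verifying that the homotopy operator preserves smooth dependence on parameters. This follows by differentiating under the integral sign, since $K$ involves integration over a compact interval with smooth integrands. Finiteness of the cover is what makes the final tensor identity hold without completions. It is also exactly where the finite-type hypothesis enters, because infinite products of copies of $C^\infty(N)$ would not reduce to a tensor product.
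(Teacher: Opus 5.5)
Your proposal is correct and follows the paper's route: the Poincar\'e lemma on contractible pieces with $N$ treated as parameters, then a Mayer--Vietoris argument over a finite good cover of $Q$, which you run in its \v{C}ech--de Rham double-complex form rather than (as ``the usual Mayer--Vietoris argument'' in Bott--Tu may also be read) by induction on the number of open sets, an inessential difference. One small adjustment: the paper's good covers are only required to have contractible (not necessarily Euclidean) intersections, so in your local step you should use a smooth contraction of each intersection in place of the radial homotopy; the homotopy formula carries over verbatim.
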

\begin{proof}
	We want to show that the map 
	\begin{equation}\label{eq:themap}
		\iota\colon\	\Omega^\bullet(Q)\otimes C^\infty(N)\to \Omega^\bullet_\pi(Q\times N)
	\end{equation}
	induces an isomorphism in cohomology.
	If $Q$ is smoothly contractible, this follows by  the Poincar\'{e} lemma on $Q$, treating the dependence on $N$ as parameters. For arbitrary $Q$ of finite type, pick a finite good open cover of $Q$, and apply the usual Mayer-Vietoris argument \cite{bo:di}. 
\end{proof}

\section{The Buchdahl argument}\label{app:buch}
Throughout the following discussion, $\pi\colon M\to N$ denotes a surjective submersion. 
\begin{proposition}[Buchdahl \cite{buc:rel}]\label{prop:buch}
	Suppose that for some $n\ge 1$, we have that 
	\[H^n(\pi^{-1}(x))=0,\ \ H^{n-1}(\pi^{-1}(x))=0
	\]
	for all $x\in N$. (For $n=1$, replace this with the assumption that $H^1(\pi^{-1}(x))=0$ for all $x\in N$ and  the fibers are connected.) 
	Then 
	\[H^n_\pi(M)=0.\]  
\end{proposition}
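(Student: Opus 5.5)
Given a $\d_\pi$-closed foliated $n$-form $\alpha\in\Omega^n_\pi(M)$, the plan is to construct a primitive $\beta$ with $\d_\pi\beta=\alpha$ globally. First we solve the problem over small pieces, then patch along an exhaustion of $N$. The correction terms needed for patching are closed $(n-1)$-forms, and the hypothesis $H^{n-1}(\pi^{-1}(x))=0$ (or connectedness, when $n=1$) is what lets us control them.

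\emph{Local step.} Fix $x\in N$ and a compact submanifold-with-boundary $K\subset\pi^{-1}(x)$, say a compact domain from an exhaustion of the fiber. Since $H^n(\pi^{-1}(x))=0$, the restriction $\alpha|_{\pi^{-1}(x)}$ is exact. Using a tubular neighbourhood, that is, a local product structure $U\times V\cong \pi^{-1}(V)\cap(\text{nbhd of }K)$ given by the submersion, the Poincar\'e lemma with parameters from the proof of Proposition~\ref{prop:product} lets us extend the primitive: after shrinking, we get a $\d_\pi$-primitive of $\alpha$ on $\pi^{-1}(V)\cap W$, with $W$ a neighbourhood of $K$. More carefully, we work on product neighbourhoods $U\times V$ where $U$ is a relatively compact open subset of the fiber with finite good cover. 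There Proposition~\ref{prop:product} identifies $H^n_\pi(U\times V)$ with $H^n(U)\otimes C^\infty(V)$, and we show that the class of $\alpha$ there vanishes after shrinking $U$ slightly. The input is that the restriction map $H^n(U')\to H^n(U)$, for $U\Subset U'\Subset \pi^{-1}(x)$, factors through $H^n(\pi^{-1}(x))=0$ in the limit. The difficulty is that this factorization only holds for a larger $U'$, and the fibers over nearby points vary and need not be locally trivial.

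\emph{Exhaustion and patching (Buchdahl).} Choose an exhaustion of $M$ by open sets $M_1\Subset M_2\Subset\cdots$ such that each $\pi|_{M_j}$ has compact fiber closures over compact sets. The aim is to find $\beta_j$ on a neighbourhood of $\overline{M_j}$ with $\d_\pi\beta_j=\alpha$ and with $\beta_{j+1}-\beta_j$ arbitrarily close to $\d_\pi$-exact on $M_{j-1}$, so that a modified sequence stabilises on each compact set and gives a global $\beta$. Two steps make this work. (i) A partition of unity on $N$ combines the local primitives over sets $\pi^{-1}(V_i)\cap M_j$. The differences are $\d_\pi$-closed $(n-1)$-forms, and they are exact on slightly smaller sets by the $H^{n-1}$-vanishing, run through the same local argument in degree $n-1$. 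A Čech-type patching, done by induction over a locally finite cover of $N$ in the style of Mayer--Vietoris, then produces $\beta_j$. (ii) For the passage from $j$ to $j+1$, the difference $\beta_{j+1}-\beta_j$ is $\d_\pi$-closed of degree $n-1$ on $M_j$. By the $(n-1)$-vanishing it is $\d_\pi$-exact on $M_{j-1}$, equal to $\d_\pi\gamma$. We replace $\beta_{j+1}$ by $\beta_{j+1}-\d_\pi(\chi\gamma)$ with $\chi$ a cutoff equal to $1$ on $M_{j-2}$, so that consecutive primitives agree on $M_{j-2}$ and the limit exists. For $n=1$ the differences are leafwise-constant functions; by connectedness of the fibers they are pulled back from $N$, and we subtract them directly without loss.

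\emph{Main obstacle.} The delicate point is the local step: showing that fiberwise vanishing at the single point $x$ gives $\d_\pi$-exactness on a full neighbourhood $\pi^{-1}(V)\cap W$, uniformly in nearby fibers, without local triviality or finite type. I would handle it by working only with compact pieces $\overline U\subset\pi^{-1}(x)$, which do have product neighbourhoods. The exactness of $\alpha|_{\pi^{-1}(x)}$ gives a primitive on a larger compact piece $\overline{U'}$. We extend this primitive to $U'\times V$ via the product structure; the error $\alpha-\d_\pi\tilde\beta$ vanishes along $U'\times\{x\}$. We then use the parametric homotopy operator on a finite good cover of $U$ to absorb the error after shrinking $V$. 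The loss of domain in passing from $U'$ to $U$ is exactly what the exhaustion scheme is built to tolerate. This is also why both $H^n$ and $H^{n-1}$ vanishing are required: the first is needed to solve locally, the second to glue.
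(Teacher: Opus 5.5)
There is a genuine gap. Your architecture matches the paper's: product neighbourhoods with Proposition~\ref{prop:product}, an exhaustion, and the cutoff correction $\beta_{j+1}-\d_\pi(\chi\gamma)$ in degree $n-1$. But the step that carries the real content is only asserted.

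What you need, in degrees $n$ and $n-1$, is the following. If $H^k(F)=0$, then for every relatively compact finite-type open $W\subset F$ there is a relatively compact finite-type open $W'\supset\overline{W}$ such that $H^k(W')\to H^k(W)$ is zero. Your phrase ``factors through $H^n(\pi^{-1}(x))=0$ in the limit'' is not a proof. Restriction runs from $F$ to $W'$, so there is no map to factor through. Vanishing of the inverse limit of the $H^k(W_a)$ does not by itself make the system essentially zero; finite-dimensionality is essential.

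The paper obtains this from Poincar\'e duality $H^k(F)\cong H^{m-k}_{\mathrm{comp}}(F,o_F)^*$ together with finite-dimensionality of $H^{m-k}_{\mathrm{comp}}(W_a,o_F)$. Finitely many compactly supported representatives have compactly supported primitives, and these lie in some $W_b$. It then sandwiches
$U_a\cap\pi^{-1}(V)\subset\Phi((U_{a+1}\cap\pi^{-1}(x))\times V)\subset\Phi((U_c\cap\pi^{-1}(x))\times V)\subset U_b\cap\pi^{-1}(V)$,
patches over finitely many $V_i$, and passes to a subsequence so that $H^k_\pi(U_{a+1})\to H^k_\pi(U_a)$ vanishes for $k=n,n-1$ (Lemma~\ref{lem:3.6}).

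Without this, your step (ii) fails. The difference $\beta_{j+1}-\beta_j$ lives only on $M_j$, so $H^{n-1}(\pi^{-1}(x))=0$ says nothing directly about its exactness on $M_{j-1}$. The same defect affects your $n=1$ case. A $\d_\pi$-closed function on $M_j$ is only locally constant on $M_j\cap\pi^{-1}(y)$. That set may be disconnected even though $\pi^{-1}(y)$ is connected, so the function need not be pulled back from $N$. (The paper handles $n=1$ with no exhaustion: it normalizes fibrewise primitives by local sections $s_i$ and averages with $\pi^*\chi_i$.)

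The mechanism in your ``main obstacle'' paragraph is also wrong. A $\d_\pi$-closed error on $U'\times V$ that vanishes along $U'\times\{x\}$ need not be exact on any smaller $U\times V'$. For example, on an annulus times $\R$ the leafwise form $y\,\d\theta$ has class $y[\d\theta]\in H^1(U)\otimes C^\infty(V')$, which vanishes only at $y=0$. Neither shrinking $V$ nor a parametric homotopy operator removes it.

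Vanishing on the single fibre over $x$ can never suffice; the hypothesis on all nearby fibres must enter. For the global $\alpha$, its class in $H^n(U')\otimes C^\infty(V)$ vanishes pointwise in $y$, because each $\alpha|_{\pi^{-1}(y)}$ is exact, and hence it is zero. For forms living only on pieces of $M$, you need the zero-restriction lemma above.

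Finally, gluing over the base needs no \v{C}ech correction and no $H^{n-1}$-vanishing. Since $\pi^*\chi_i$ is constant along fibres, $\d_\pi\sum_i(\pi^*\chi_i)\beta_i=\sum_i(\pi^*\chi_i)\,\d_\pi\beta_i=\alpha$. The $(n-1)$-vanishing is used only for the passage along the exhaustion, as in your step (ii).
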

We shall give a proof in Section \ref{subsec:buch} below, using a modified version of the argument in \cite{buc:rel} and elementary results from de Rham theory.  Clearly, Proposition \ref{prop:keycorollary} follows from Proposition \ref{prop:buch}. 

\subsection{Preparatory lemmas}
Recall \cite{bo:di} that a \emph{good cover}  of a manifold $M$ is an open
cover such that every non-empty finite intersection of members of the
cover is contractible. It is a standard fact \cite{gr:co1} that every manifold $M$ admits 
a countable good cover. A manifold is of \emph{finite type} if it admits a finite good cover. This then implies that the de Rham cohomology of $M$ is finite dimensional. 
Given a manifold $M$, not necessarily of finite type, there exists an exhaustion by open subsets $U_1\subset U_2\subset \ldots$ with compact closures, such that 
\[ \bigcup_a U_a=M, \ \ \ \ \ \ol{U}_a\subset U_{a+1},\] 
and each $U_a$ is of finite type. (Just take a good open cover by open subsets with compact closures, and take $U_a$ to 
consist of suitable increasing finite unions of that cover.) Given such an exhaustion, we have: 

\begin{lemma}
	Suppose $H^k(M)=0$ for some $k$. Then there exists, for every $a$, some $b>a$ such that 
	the map  $H^k(U_{b})\to H^k(U_{a})$ is the zero map.
\end{lemma}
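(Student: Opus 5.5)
We fix $a$ and use that $U_a$ is of finite type, so $H^k(U_a)$ is finite dimensional. For each $b>a$, let
\[ V_b=\operatorname{im}\bigl(H^k(U_b)\to H^k(U_a)\bigr)\subseteq H^k(U_a). \]
Since restriction factors as $H^k(U_{b+1})\to H^k(U_b)\to H^k(U_a)$, the subspaces $V_b$ are decreasing in $b$. By finite dimensionality of $H^k(U_a)$, the sequence stabilizes: there is $b_0$ with $V_b=V_{b_0}=:V$ for all $b\ge b_0$. It then suffices to show $V=0$, and we may take $b=b_0$.

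We argue by contradiction: suppose $0\neq c\in V$. The plan is to build a compatible sequence of closed forms, which glue to a global closed form representing a nonzero class on $M$, contradicting $H^k(M)=0$. The stabilization has to be applied at every level, not only at $a$. The cleanest way is to use the same finite-dimensionality argument for each $U_c$ with $c\ge a$. Write
\[ W_c=\bigcap_{b>c}\operatorname{im}\bigl(H^k(U_b)\to H^k(U_c)\bigr) \]
for the stable image in $H^k(U_c)$. Each $W_c$ is attained at a finite stage, and restriction maps $W_{c+1}$ onto $W_c$. To see surjectivity, take $b$ large enough to be stable for both $c$ and $c+1$. An element $w\in W_c$ is the image of some $z\in H^k(U_b)$ with $b$ very large, and the restriction of $z$ to $U_{c+1}$ lies in $\operatorname{im}(H^k(U_b)\to H^k(U_{c+1}))$. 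Choosing $b$ beyond the stabilization index for $c+1$, this restriction lies in $W_{c+1}$ and maps to $w$. Starting from $c_a:=c\in W_a=V$, we therefore choose inductively classes $c_{j}\in W_{j}$ with $c_{j+1}|_{U_j}=c_j$.

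Next we lift the classes to forms. Choose closed forms $\omega_j\in\Omega^k(U_j)$ representing $c_j$, and correct them inductively so that they agree on smaller sets. Since $\omega_{j+1}|_{U_j}-\omega_j=\d\beta_j$ on $U_j$, we cut off $\beta_j$ with a bump function $\chi_j$ equal to $1$ on a neighborhood of $\ol U_{j-1}$ and supported in $U_j$; here $\ol U_{j-1}\subset U_j$ is compact. Replacing $\omega_{j+1}$ by $\omega_{j+1}-\d(\chi_j\beta_j)$ does not change its class, and makes it agree with $\omega_j$ on $U_{j-1}$. The modified forms then stabilize on each $U_j$, since $\omega_{m}|_{U_j}$ is constant for $m\ge j+2$, so they define a closed form $\omega\in\Omega^k(M)$ with $[\omega]|_{U_a}=c\neq 0$. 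This contradicts $H^k(M)=0$, so $V=0$.

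The main obstacle is the Mittag-Leffler-type step: making sure the chosen classes can be extended compatibly to all levels. This is exactly where finite dimensionality (finite type of each $U_j$) is needed, since it guarantees stabilization of images and hence surjectivity of $W_{j+1}\to W_j$. The form-gluing step is routine given $\ol U_j\subset U_{j+1}$.
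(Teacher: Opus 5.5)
Your argument is correct, and it takes a genuinely different route from the paper's.

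\textbf{The paper's route.} The paper dualizes. By Poincar\'e duality, valid for arbitrary manifolds with coefficients in the orientation bundle, the map $H^k(U_b)\to H^k(U_a)$ is the transpose of extension by zero $H^{\ell}_{\on{comp}}(U_a,o_M)\to H^{\ell}_{\on{comp}}(U_b,o_M)$, where $\ell=\dim M-k$. On the compactly supported side the ``finite stage'' comes for free. Closed representatives of a basis of the finite-dimensional space $H^{\ell}_{\on{comp}}(U_a,o_M)$ have compactly supported primitives on $M$. Finitely many compact supports then lie in a single $U_b$.

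\textbf{Your route.} You stay with ordinary cohomology, where primitives are not localized, and replace duality by a Mittag-Leffler argument. Finite-dimensionality of every $H^k(U_j)$, $j\ge a$, gives stabilization of the images and surjectivity of the stable images $W_{j+1}\to W_j$. The cut-off correction of closed representatives then shows that any compatible sequence of classes comes from a closed form on $M$. In other words, $H^k(M)$ surjects onto the inverse limit of the $H^k(U_j)$, and each stable image is hit.

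\textbf{What each buys.} The paper's version is shorter and uses the finite-type hypothesis only for $U_a$. However, it rests on Poincar\'e duality for manifolds without a finite good cover, which is a deeper input than anything else in that section. Your version uses finite type of all the $U_j$ and the inclusions $\overline{U}_j\subset U_{j+1}$. In exchange it is self-contained at the level of basic de Rham theory and needs no orientation twist. Its gluing step is also the same telescoping device the paper itself uses in the proof of Proposition~\ref{prop:buch}.

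One cosmetic point: you use $c$ both for the nonzero class and as the index in $W_c$, which makes the inductive step harder to read than it needs to be.
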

\begin{proof}
	Let $H^\bullet_{\on{comp}}(M,o_M)$ be the cohomology with compact support, with values in the orientation bundle
	$o_M$.
	Poincar\'{e} duality \cite[page 194]{gr:co1}
	states that for every manifold $M$ (not necessarily of finite type), the natural map 
	\[ H^\bullet(M)\to 	(H_{\on{comp}}^{\dim M-\bullet}(M,o_M))^*\]
	is an isomorphism. The statement is hence equivalent to saying that 
	if	$H^{\ell}_{\on{comp}}(M,o_M)=0$ for some $\ell$, then there exists, for every given $a$, some $b>a$ such that 
	the map  $H^\ell_{\on{comp}}(U_{a},o_M)\to H^\ell_{\on{comp}}(U_{b},o_M)$ is the zero map. Since 
	$U_a$ is of finite type, the cohomology group $H^\ell_{\on{comp}}(U_{a},o_M)$ is finite dimensional. We may therefore choose closed forms 
	\[ \alpha_1,\ldots,\alpha_r\in  \Omega^\ell_{\on{comp}}(M,o_M)
	\] 
	supported in $U_a$, such that their cohomology classes form a basis. Since $H^{\ell}_{\on{comp}}(M,o_M)=0$, 
	these forms admit primitives
	\[\beta_1,\ldots,\beta_r
	\in  \Omega^{\ell-1}_{\on{comp}}(M,o_M).\] 
	These primitives need not be supported in $U_a$, but we can choose  $b>a$ so that 
	$U_b$ contains the support of these primitives. It then follows that $\alpha_i|_{U_b}$ are exact.  
	Hence 
	$H^{\ell}_{\on{comp}}(U_a,o_M)\to  H^{\ell}_{\on{comp}}(U_b,o_M)$ is the zero map, and so is the dual map. 
\end{proof}

We shall need the analogous results for submersions  $\pi\colon M\to N$. 
\begin{lemma}[Fiberwise good covers]
	Given a submersion $\pi\colon M\to N$, we may choose a locally finite cover of $M$ by open sets, with compact closures, such that for every $x\in N$ the resulting cover of the fiber $\pi^{-1}(x)$ obtained by intersections is a good cover.
\end{lemma}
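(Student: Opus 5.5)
The standard way to build a good cover is to use geodesically convex balls of a Riemannian metric: a finite intersection of convex balls is convex, hence contractible. I would run this argument fiberwise, using a metric adapted to the submersion, so that the convex balls in each fiber are cut out uniformly from open subsets of $M$.

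\emph{Step 1 (metric).} Choose a Riemannian metric $g$ on $M$ and restrict it to $T_\pi M$. This gives a smooth family of metrics $g_x$ on the fibers $\pi^{-1}(x)$, which varies smoothly in $x$.

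\emph{Step 2 (uniform convexity radius).} For each $p\in M$, use a submersion chart $M\supseteq W\cong V\times \R^{q}$ around $p$ in which $\pi$ is the projection to $V\subseteq N$. The fiberwise exponential map and the convexity radius depend continuously on parameters. Hence there are $\epsilon_p>0$ and a neighbourhood $W_p$ of $p$ with the following property: for all $m\in W_p$ and all $r\le \epsilon_p$, the fiberwise ball
\[ B_\pi(m,r)=\{m'\in \pi^{-1}(\pi(m)) : d_{\pi(m)}(m,m')<r\} \]
is strongly geodesically convex in the fiber $\pi^{-1}(\pi(m))$ and has compact closure in $M$. A more robust variant avoids Riemannian distance on fibers altogether: take $g$ so that in the chart $W$ the vertical metric is the Euclidean one on $\R^q$ to second order, and use honestly Euclidean convex sets. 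The simplest version of this is to use open sets of the form $V'\times C$, with $C\subseteq\R^q$ convex.

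\emph{Step 3 (open sets in $M$).} Define open sets $O\subseteq M$ whose intersection with every fiber is either empty or a convex fiberwise ball. Concretely, take a local section $\sigma$ of $\pi$ through $p$, defined on a neighbourhood $V_p$ of $\pi(p)$, and set
\[ O=\{m\in\pi^{-1}(V_p) : d_{\pi(m)}(\sigma(\pi(m)),m)<r\} .\]
Openness of $O$ follows from continuity of the fiberwise distance near the diagonal. Then $O\cap\pi^{-1}(x)=B_\pi(\sigma(x),r)$, which is convex. Any finite intersection of such sets meets each fiber in an intersection of convex balls, which is convex, hence empty or contractible. This is exactly the fiberwise good-cover property.

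\emph{Step 4 (local finiteness).} Shrinking $V_p$ and $r$ gives compact closures. Then extract a locally finite subcover by the usual paracompactness argument. Fix an exhaustion of $M$ by compact sets $K_j$. Cover each compact annulus $K_{j+1}\setminus \mathrm{int}K_j$ by finitely many sets of the above type, chosen small enough to lie inside $\mathrm{int}K_{j+2}\setminus K_{j-1}$. Passing to a subcover preserves the fiberwise good property, since that property only concerns intersections of members of the cover.

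The main obstacle is Step 2: justifying that the convexity radius of the fiber metrics is locally uniform in parameters across fibers. I would handle it via the chart trick from Step 2, which sidesteps Riemannian convexity entirely. In a submersion chart $W\cong V\times\R^q$, take sets $V'\times C$ with $C$ a Euclidean ball. Finite intersections of such sets restricted to a fiber are intersections of sets that are convex in possibly \emph{different} charts, however, so this alone fails. That is why I would instead choose $g$ on $M$ and use fiberwise geodesic convexity as in Step 2, relying on the standard estimate that the convexity radius is bounded below by a continuous function of curvature bounds and injectivity radius. Both quantities are locally uniform in smooth families over compact parameter sets.
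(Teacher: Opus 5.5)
Your proposal is correct and is essentially the paper's proof. Both use fiberwise $\epsilon$-balls about a local section, with $\epsilon$ below the fiberwise convexity radius along the section, so that finite intersections meet each fiber in strongly convex, hence contractible, sets, and both finish with the usual exhaustion argument for local finiteness; you were right to discard the chart-wise Euclidean variant. In Step 2, since fibers may be non-complete and non-diffeomorphic, the uniform lower bound is best justified by applying the parametric inverse function theorem to $(m,v)\mapsto(m,\exp_m v)$ on a neighbourhood of the zero section of $T_\pi M$, rather than by appealing to global injectivity radii of the fibers; the paper likewise takes this uniformity for granted.
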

\begin{proof}
	Choose a Euclidean inner product on $T_\pi M=\ker(T\pi)$. This determines a family of Riemannian metrics on the fibers of $\pi$, depending smoothly on the base point in $N$.  
	For every $p\in M$, there exists an open neighborhood $U$ of $p$ with the 
	property that all intersections $U\cap \pi^{-1}(x)$ are geodesically convex, in the strong sense that any two points of this set are connected by a unique length minimizing geodesic segment inside this set. 
		(For example, we may choose a section of $\pi\colon M\to N$ over 
	an open subset $V\subset N$
	passing through $p$. After shrinking $V$ if needed, choose $\epsilon>0$ smaller than the fiberwise convexity radius along the image of the section. The fiberwise $\epsilon$-balls about the section then have convex intersections with every fiber.)
	 The intersection of geodesically convex subsets is again geodesically convex. A compact subset of $M$ is covered by finitely many subsets of this kind. 
	By the usual exhaustion argument, this leads to a locally finite cover of $M$ by  subsets of this kind. 
\end{proof}

\begin{lemma}[Fiberwise exhaustions]
	Let $\pi\colon M\to N$ be a submersion. Then there exists a nested sequence of open subsets $U_1\subset U_2\subset \cdots $ of $M$, with compact closures, 
	such that 
	\[ \bigcup_a U_a=M, \ \ \ \ \ \ol{U}_a\subset U_{a+1},\] 
	and such that each $\pi^{-1}(x)\cap U_a$ is of finite type for every $x\in N$. 
\end{lemma}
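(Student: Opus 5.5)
We will build the exhaustion from the locally finite cover $\mathcal{W}=\{W_i\}$ provided by the Lemma on fiberwise good covers, in the same way as in the absolute case (take finite unions of cover members). The property we need is this: for each $x\in N$, the sets $W_i\cap\pi^{-1}(x)$ form a good cover of the fiber. Fix a compact exhaustion $K_1\subset K_2\subset\cdots$ of $M$ with $K_a\subset \on{int}(K_{a+1})$. Define inductively finite index sets $I_1\subset I_2\subset\cdots$ and put
\[ U_a=\bigcup_{i\in I_a} W_i .\]
We require $I_a$ to be large enough that $U_a\supset K_a\cup \ol{U}_{a-1}$. This is possible because $\ol{U}_{a-1}$ is compact (a finite union of sets with compact closures) and the $W_i$ cover $M$. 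Each $U_a$ then has compact closure $\ol{U}_a\subset\bigcup_{i\in I_a}\ol{W}_i$, and we get $\ol U_a\subset U_{a+1}$ and $\bigcup_a U_a=M$.

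It remains to check that each $\pi^{-1}(x)\cap U_a$ has finite type. This set is covered by the finitely many sets $W_i\cap\pi^{-1}(x)$, $i\in I_a$, after discarding the empty ones. We claim this finite family is a good cover of $\pi^{-1}(x)\cap U_a$. Any nonempty finite intersection of its members is a nonempty finite intersection of members of the fiberwise cover of $\pi^{-1}(x)$. By the Lemma on fiberwise good covers, such an intersection is contractible. Indeed, it is geodesically convex in the strong sense for the fiber metric, and a strongly convex set is contractible: it is star-shaped along the unique minimizing geodesics from a point, and these depend continuously on the endpoints. Hence $\pi^{-1}(x)\cap U_a$ admits a finite good cover, so it is of finite type.

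The only delicate point is that the intersection with the fiber can be empty or can pick up extra sets. This is harmless, since we only need the cover restricted to the fiber, and empty members are discarded. Note also that intersections $W_{i_1}\cap\cdots\cap W_{i_k}$ which are nonempty in $M$ may still meet a particular fiber in the empty set. That is consistent with the definition of a good cover, which only asks that \emph{nonempty} intersections be contractible.
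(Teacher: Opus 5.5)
Your proof is correct and follows the paper's argument. The paper's proof simply says to take increasing finite unions of members of a fiberwise good cover, as in the case $N=\pt$; you supply the details: compactness of $\ol{U}_{a-1}$ gives $\ol{U}_{a-1}\subset U_a$, and the finitely many nonempty sets $W_i\cap\pi^{-1}(x)$, $i\in I_a$, form a finite good cover of $\pi^{-1}(x)\cap U_a$. Your aside on why strongly convex sets are contractible is not needed, since the lemma on fiberwise good covers already guarantees that nonempty finite intersections within each fiber are contractible.
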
 
\begin{proof}
	As in the case of $N=\pt$, we may build the $U_a$ by taking increasing finite unions of members of a fiberwise good cover. 
\end{proof}

\begin{lemma}[Product neighborhoods]
Given $x\in N$ and any open subset $U\subset M$ with compact closure, there exists an open neighborhood $V$ of $x$ and an embedding 
\[ 	\Phi\colon (U\cap \pi^{-1}(x))\times V\to M\]
intertwining projection to the second factor with the map $\pi$. 
\end{lemma}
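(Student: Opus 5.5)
The plan is to use the flow of a vector field: lift coordinate vector fields from a chart of $N$ at $x$ to $M$ via a horizontal complement. Fix $x\in N$ and a chart of $N$ identifying an open neighborhood $W$ of $x$ with an open ball in $\R^m$ centred at $0$, with $x\mapsto 0$. Choose a complement $H\subset TM$ to $T_\pi M$, for instance the orthogonal complement for an auxiliary Riemannian metric on $M$. Then $T\pi$ restricts to an isomorphism $H_p\to T_{\pi(p)}N$. For $v\in\R^m$, let $X_v$ be the unique section of $H$ over $\pi^{-1}(W)$ that is $\pi$-related to the constant vector field $v$ on $W$. This field depends linearly, hence smoothly, on $v$.

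Define $\Phi(p,v)=\phi^{X_v}_1(p)$, the time-one flow of $X_v$ applied to $p\in U\cap\pi^{-1}(x)$, for $v$ in a small ball $V$ around $0$. Since $X_v$ is $\pi$-related to the constant field $v$, its integral curves project to the straight lines $t\mapsto tv$ in $W$. Hence $\pi(\Phi(p,v))=v$, so $\Phi$ intertwines projection to the second factor with $\pi$.

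The main obstacle is that the flow need not exist up to time $1$, because $M$ is not complete along the fibres. This is where compactness of $\ol{U}$ is used. Consider the smooth map $(p,v,t)\mapsto \phi^{X_v}_t(p)$. It is defined on an open subset of $\pi^{-1}(W)\times\R^m\times\R$ that contains the compact set $\ol{U}\times\{0\}\times[0,1]$, since $X_0=0$ and its flow exists for all time. A tube-lemma argument then gives $\epsilon>0$ such that the flow is defined for all $p\in\ol{U}$, $|v|<\epsilon$ and $t\in[0,1]$. Here one also needs $tv\in W$, which holds once $\epsilon$ is small. Taking $V=\{|v|<\epsilon\}$ makes $\Phi$ a smooth map on $(U\cap\pi^{-1}(x))\times V$.

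It remains to check that $\Phi$ is an embedding, and this is a matter of shrinking $V$. Injectivity follows from the flow: if $\Phi(p,v)=\Phi(p',v')$, then $v=v'$ by applying $\pi$, and then $p=p'$ because $\phi^{X_v}_1$ is injective. $\Phi$ is an immersion since $\Phi(\cdot,0)$ is the inclusion and $\partial_v\Phi$ at $v=0$ is horizontal and surjects onto $T_0V$. More generally, $\Phi(\cdot,v)=\phi^{X_v}_1$ is a diffeomorphism onto its image inside the fibre over $v$, and $T\pi\circ\partial_v\Phi=\mathrm{id}$, so the differential is injective everywhere. Finally, $\Phi$ is a homeomorphism onto its image: the inverse is $q\mapsto(\phi^{X_{\pi(q)}}_{-1}(q),\pi(q))$, which is continuous on the image. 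So $\Phi$ is an injective immersion with continuous inverse, hence an embedding, and no extra properness argument is needed.
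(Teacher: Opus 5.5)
Your proof is correct, and it takes a more hands-on route than the paper. The paper uses Euler-like vector fields. It lifts an Euler-like vector field for $\{x\}$ on $N$ to one for $\pi^{-1}(x)$ on $M$. This gives tubular neighborhood embeddings $\phi_M\colon O_M\to M$ and $\phi_N\colon O_N\to N$ that intertwine the linear submersion $\nu(M,\pi^{-1}(x))\cong \pi^{-1}(x)\times T_xN\to T_xN$ with $\pi$. Compactness of $\ol U\cap\pi^{-1}(x)$ then fits a product $(\ol U\cap\pi^{-1}(x))\times\phi_N^{-1}(V)$ inside $O_M$. Because $\phi_M$ is already known to be an open embedding, nothing further needs checking.

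You build the map directly from an Ehresmann connection and flows. This costs you two hand verifications: existence of the flow up to time $1$, where your tube-lemma step plays the role of the paper's compactness step, and the embedding property. In return the argument is self-contained and uses only ODE theory, with no appeal to the Euler-like tubular neighborhood theorem.

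The two constructions actually agree. The horizontal lift $q\mapsto X_{\pi(q)}(q)$ of the radial field $\sum_i y_i\,\partial_{y_i}$ on the chart is Euler-like along $\pi^{-1}(x)$. Since $\Phi(p,tv)=\phi^{X_v}_t(p)$, your $\Phi$ intertwines the Euler vector field on $(U\cap\pi^{-1}(x))\times V$ with this lift. So your map is the paper's construction for this particular choice of Euler-like vector field.

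Two small corrections. Where you write $\ol U\times\{0\}\times[0,1]$ and ``for all $p\in\ol U$'', you should use the compact set $(\ol U\cap\pi^{-1}(x))\times\{0\}\times[0,1]$, because $\ol U$ need not lie in $\pi^{-1}(W)$; the rest of the argument goes through unchanged. Also, $\dim\bigl((U\cap\pi^{-1}(x))\times V\bigr)=\dim M$, so your injective immersion is automatically an open embedding. That openness is what Lemma~\ref{lem:3.6} actually uses, so it is worth stating explicitly.
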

\begin{proof}
	Such a neighborhood may be constructed using Euler-like vector fields \cite{bur:spl}. 
	The map $\pi$ induces a map of normal bundles 	$ \nu(M,\pi^{-1}(x))\to \nu(N,\{x\})=T_xN$, 
	which is a fiberwise isomorphism (since $\pi$ is a submersion). One hence has a trivialization
	\[
	\nu(M,\pi^{-1}(x))\cong \pi^{-1}(x)\times T_xN.
	\]
	Pick an Euler-like vector field on $N$ with respect to $\{x\}$. It admits a lift to an Euler-like vector field on $M$ with respect to $\pi^{-1}(x)$. One obtains open neighborhoods $O_M\subset \nu(M,\pi^{-1}(x))$ of $\pi^{-1}(x)$ 
	and $O_N\subset \nu(N,\{x\})$ of $\{x\}$,  with tubular neighborhood embeddings 
	\[ \phi_M\colon O_M\to M,\ \ \phi_N\colon O_N\to N\]
	which intertwine the submersion  $O_M\to O_N$ (given by restriction of $\nu(M,\pi^{-1}(x))\to  T_xN$) 
	with the given submersion $\pi$. The subset 
	$O_M$ may be strictly smaller than	$ \pi^{-1}(x)\times O_N$ in general. However, since $\ol{U}\cap \pi^{-1}(x)$ is compact, we 
	may choose an open neighborhood $V$ of $x$ such that $(\ol{U}\cap \pi^{-1}(x))\times \phi_N^{-1}(V)\subset O_M$. The map $\Phi$ is obtained by restriction of $\phi_M\circ (\on{id}\times \phi_N^{-1})$.
\end{proof}

\begin{lemma}\label{lem:3.6}
	Suppose that for some $k$, we have that 
	$H^k(\pi^{-1}(x))=0$ for all $x\in N$. Given an exhaustion as in the previous lemma, there exists, for all $a$, some $b>a$ such that the restriction map  
	\[ H^k_\pi(U_{b})\to H^k_\pi(U_a)\]
	is zero. 
\end{lemma}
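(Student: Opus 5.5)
The plan is to combine the first lemma of this subsection (the case $N=\pt$) with the product neighborhoods lemma and Proposition~\ref{prop:product}, and then use compactness of $\ol{U}_a$ together with a partition of unity on $N$. Since $\ol{U}_{a}$ is compact, so is $\pi(\ol{U}_a)$, and it suffices to produce, for each $x\in \pi(\ol{U}_a)$, an open neighborhood $V_x$ of $x$ and an index $b_x>a$ such that every closed foliated $k$-form on $U_{b_x}$ becomes exact on $U_a\cap\pi^{-1}(V_x)$. We then take $b$ larger than the finitely many $b_x$ needed to cover $\pi(\ol{U}_a)$, and patch.

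\emph{Local step.} Fix $x$ and write $F=\pi^{-1}(x)$, with exhaustion $F\cap U_1\subset F\cap U_2\subset\cdots$ by finite-type open sets. By the first lemma applied to $F$ (using $H^k(F)=0$), there is $c>a+1$ such that $H^k(F\cap U_c)\to H^k(F\cap U_{a+1})$ is zero. Take $b_x=c+1$ and apply the product neighborhoods lemma to $U=U_{b_x}$. This gives an embedding $\Phi\colon (F\cap U_{b_x})\times V\to M$ over $V$. Shrinking $V$ by compactness of $\ol{U}_{a}\cap F\subset U_{a+1}\cap F$ and of $\ol U_{c}\cap F$, we may arrange
\[ U_a\cap \pi^{-1}(V)\subset \Phi((F\cap U_{a+1})\times V),\qquad \Phi((F\cap U_c)\times V)\subset U_{b_x}. \]
A closed foliated form $\alpha$ on $U_{b_x}$ pulls back to a closed leafwise form on $(F\cap U_c)\times V$. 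By Proposition~\ref{prop:product} (here $F\cap U_c$ is of finite type), its class lies in $H^k(F\cap U_c)\otimes C^\infty(V)$, and it maps to $H^k(F\cap U_{a+1})\otimes C^\infty(V)$ by the restriction map, which is zero. Hence $\alpha$ is exact on $U_a\cap\pi^{-1}(V)$. To be safe, I would shrink $V$ to a relatively compact neighborhood so that these containments hold on its closure.

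\emph{Patching step.} This is the expected obstacle. Local primitives $\beta_i$ on $U_a\cap\pi^{-1}(V_i)$ do not simply glue via $\sum(\chi_i\circ\pi)\beta_i$. Since $\chi_i\circ\pi$ is constant along fibers, however, we have $\d_\pi((\chi_i\circ\pi)\beta_i)=(\chi_i\circ\pi)\alpha$. So the sum $\sum_i (\chi_i\circ\pi)\beta_i$, taken with a partition of unity $\{\chi_i\}$ subordinate to $\{V_i\}$ on a neighborhood of $\pi(\ol U_a)$, is a primitive of $\alpha$ on $U_a$ with no correction terms at all. This works precisely because basic functions are $\d_\pi$-closed, and it is the key point where the foliation by fibers of $\pi$ (as opposed to a general foliation) is used.

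Thus every closed foliated $k$-form on $U_b$ is exact on $U_a$, i.e.\ the restriction map $H^k_\pi(U_{b})\to H^k_\pi(U_a)$ is zero. The only real care needed is in the compactness bookkeeping for the shrinking of $V$ in the local step.
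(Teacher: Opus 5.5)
Your proposal is correct and follows essentially the same route as the paper. You apply the $N=\pt$ lemma on the fiber, get local vanishing over some $V$ from a product neighborhood and Proposition~\ref{prop:product}, take a finite cover of $\pi(\ol{U}_a)$, and patch with $\beta=\sum_i(\pi^*\chi_i)\beta_i$, which works because $\d_\pi$ commutes with multiplication by basic functions. The one cosmetic difference is that you apply the product-neighborhood lemma to $U_{c+1}$ and shrink $V$ so the image lands in it, whereas the paper applies it to $U_c$ and then takes $b$ large enough to contain the image.
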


\begin{proof}\phantom{.}
	Given $x\in N$, fix  $c>a+1$ so that 	
	\[ H^k(U_{c}\cap \pi^{-1}(x))\to H^k(U_{a+1}\cap \pi^{-1}(x))\]
	is the zero map. 
	Choose a product open neighborhood $\Phi((U_c\cap \pi^{-1}(x))\times V)$ as in 
	the previous lemma. 
	Shrinking $V$ if necessary, the product neighborhood  $\Phi((U_{a+1}\cap \pi^{-1}(x))\times V)$ contains 
	$\ol{U}_{a}\cap \pi^{-1}(V)$, and taking $b$ sufficiently large the neighborhood $\Phi((U_c\cap \pi^{-1}(x))\times V)$ 
	is contained in $U_b$. This gives inclusions
	\[ U_{a}\cap \pi^{-1}(V)\hra  \Phi((U_{a+1}\cap \pi^{-1}(x))\times V)\hra \Phi((U_c\cap \pi^{-1}(x))\times V)
	\hra U_b\cap \pi^{-1}(V)
	.\]
  The map 
   \[ H^k_\pi (\Phi((U_c\cap \pi^{-1}(x))\times V))\to  H^k_\pi (\Phi((U_{a+1}\cap \pi^{-1}(x))\times V)),\]
 induced by the middle inclusion is the zero map, since 
 \[ H^k_\pi (\Phi((U\cap \pi^{-1}(x))\times V))=H^k(U\cap \pi^{-1}(x))\otimes C^\infty(V)\]
 by Proposition \ref{prop:product}. It follows that the map 
   \[ H^k_\pi(U_{b}\cap \pi^{-1}(V))\to H^k_\pi(U_a\cap \pi^{-1}(V))\]
 is the zero map.  The base manifold $N$ may be covered by open subsets $V$ of this sort. Since 
   $U_a$ has compact closure, there is a finite collection $\{V_i\}$ of such open subsets, such that 
   $\{\pi^{-1}(V_i)\}$ cover $\ol{U}_a$. Choose $b_i$ such that the maps 
     \[ H^k_\pi(U_{b_i}\cap \pi^{-1}(V_i))\to H^k_\pi(U_a\cap \pi^{-1}(V_i))\]
  are all zero, and let $b$ be their maximum. Given $\alpha\in \Omega^k_\pi(U_b)$ with $\d_\pi\alpha=0$, 
  the restriction to $U_{a}\cap \pi^{-1}(V_i)$ admits a primitive $\beta_i$. 
  Let  $\{\chi_i\}$ be a partition of unity  subordinate to $\{V_i\}$ (regarded as a cover of $\bigcup V_i\subset N$) 
    and put $\beta=\sum_i (\pi^*\chi_i)\beta_i$. 
  Then $\d_\pi\beta=\alpha$ on $U_a$. 
\end{proof}

\subsection{Proof of Proposition \ref{prop:buch}} \label{subsec:buch}
Consider first the case $n=1$, thus assume that all fibers are connected and have vanishing first cohomology. 
Choose an open cover $\{V_i\}$ of $N$ such that $\pi$ admits a
section
\[s_i\colon V_i\to \pi^{-1}(V_i)\subset  M,\]
and let $\{\chi_i\}$ be a partition of unity subordinate to this cover. Given a $\d_\pi$-closed form 
 $\alpha\in\Omega^1_\pi(M)$  we obtain unique functions $f_i\colon \pi^{-1}(V_i)\to \R$ such that $f_i\circ s_i=0$ and 
the restriction of $f_i$ to each fiber $\pi^{-1}(x),\ x\in V_i$ is a primitive of the restriction of  $\alpha$. The function $f_i$ is smooth. (To see this near a given point $p\in \pi^{-1}(x)$, choose a path in $\pi^{-1}(x)$  from $p_0=s_i(x)$ to $p$. 
Subdivide the path into finitely many segments joining successive
points
\[
p_0,p_1,\ldots,p_N=p,
\]
 each of which is contained in a submersion chart. It then follows inductively that $f_i$ is smooth near $p_1$, hence 
smooth near $p_2$, and so on.)  The function 
\[
f=\sum_i (\pi^*\chi_i)f_i
\]
satisfies $\d_\pi f=\alpha$. This proves $H^1_\pi(M)=0$.

Consider now the case $n>1$. Lemma \ref{lem:3.6}, after passing to a subsequence, allows us to construct a sequence 
\[ U_1\subset U_2\subset \cdots \]
of open subsets of $M$, with compact closures, satisfying 
\[ \ol{U}_a\subset U_{a+1},\ \ \ \bigcup_a U_a=M,\] 
such that for all $a$ the maps 
	\[ H^n_\pi(U_{a+1})\to H^n_\pi(U_a),\ \ \ H^{n-1}_\pi(U_{a+1})\to H^{n-1}_\pi(U_a),
	\]
are zero. Let us also choose smooth functions $\lambda_a$ supported on $U_{a+1}$, with $\lambda_a|_{U_a}=1$. 

To show that $H^n_\pi(M)=0$, let a closed foliated form
\[ \alpha\in \Omega^n_\pi(M),\ \ \ \d_\pi\alpha=0\] 
be given. For every $a$, the restriction of $\alpha$ to $U_{a+3}$ is closed,
and hence its restriction to $U_{a+2}$ is exact. Choose
\[
\beta'_a\in\Omega^{n-1}_\pi(U_{a+2}),
\qquad
d_\pi\beta'_a=\alpha|_{U_{a+2}}.
\]
We shall modify these primitives inductively. Put
$\beta_1=\beta'_1$, and suppose that
\[
\beta_a\in\Omega^{n-1}_\pi(U_{a+2})
\]
has been constructed.  The difference $\beta'_{a+1}|_{U_{a+2}}-\beta_a\in \Omega^{n-1}_\pi(U_{a+2})$ is closed, and becomes exact after further restriction to $U_{a+1}$. Let 	
\[ \gamma_a\in \Omega_\pi^{n-2}(U_{a+1})\] 
be a primitive. Since $\lambda_a\gamma_a$ has compact support in $U_{a+1}$, it
extends by zero to $U_{a+3}$. Define
\[ \beta_{a+1}=\beta'_{a+1}-\d_\pi(\lambda_a \gamma_a).\]
Then $\beta_{a+1}$ agrees with $\beta_a$ on $U_a$, and is still a primitive of $\alpha|_{U_{a+3}}$. 
Consequently, there is a global form $\beta\in \Omega^{n-1}_\pi(M)$ with $\beta|_{U_a}=\beta_a$. 
This form  satisfies $\d_\pi\beta=\alpha$.  \qedhere

\subsection{Coefficients}
As already mentioned, Proposition \ref{prop:keycorollary} 
is a direct consequence of Proposition \ref{prop:buch}. In fact, we obtain a more general statement by considering coefficients. 
Let 
\[ E\to N\]
be a given vector bundle. Then $\pi^*E$ is fiberwise trivial, and has a canonical $T_\pi M$-representation, 
defining $H^\bullet_\pi(M,\pi^*E)$.

\begin{corollary}\label{cor:buchcorollary}
	Under the assumptions of Proposition \ref{prop:keycorollary}, let $E\to N$ be a vector
	bundle. Then
	\[
	H^k_\pi(M,\pi^*E)=0,
	\qquad 1\leq k\leq n,
	\]
	and
	\[
	H^0_\pi(M,\pi^*E)=\Gamma(E).
	\]
\end{corollary}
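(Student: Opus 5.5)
The plan is to reduce to the case of the trivial bundle $E=N\times\R^r$, where the statement is just $r$ copies of Proposition \ref{prop:buch}, and then pass to general $E$ by localizing over the base $N$.

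First, the case $E=N\times\R^r$. Here $\pi^*E=M\times\R^r$ carries the trivial $T_\pi M$-representation. Hence the complex $\Omega^\bullet_\pi(M,\pi^*E)$ is the direct sum of $r$ copies of $\Omega^\bullet_\pi(M)$, and $H^k_\pi(M,\pi^*E)=0$ for $1\le k\le n$ follows from Proposition \ref{prop:buch}. The same reasoning gives $H^k_\pi(\pi^{-1}(V),\pi^*E)=0$ for every open $V\subset N$ over which $E$ is trivial. Indeed, $\pi|_{\pi^{-1}(V)}\colon\pi^{-1}(V)\to V$ is again a surjective submersion, its fibers are fibers of $\pi$, and so the hypotheses are inherited.

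For general $E$, I choose a locally finite open cover $\{V_i\}$ of $N$ trivializing $E$, together with a subordinate partition of unity $\{\chi_i\}$. For $1\le k\le n$, let $\alpha\in\Omega^k_\pi(M,\pi^*E)$ be $\d_\pi$-closed.
\begin{enumerate}
\item On each $\pi^{-1}(V_i)$, the previous step gives a primitive $\beta_i$ with $\d_\pi\beta_i=\alpha|_{\pi^{-1}(V_i)}$.
\item Put $\beta=\sum_i(\pi^*\chi_i)\beta_i$.
\item Since $\pi^*\chi_i$ is constant along the fibers, $\d_\pi(\pi^*\chi_i)=0$. The $T_\pi M$-representation on $\pi^*E$ is $C^\infty(N)$-linear in this sense: for $f\in C^\infty(N)$ and $\sigma\in\Gamma(\pi^*E)$ one has $\d_\pi((\pi^*f)\sigma)=(\pi^*f)\d_\pi\sigma$, with the same rule for forms.
\item Therefore $\d_\pi\beta=\sum_i(\pi^*\chi_i)\alpha=\alpha$.
\end{enumerate}
This gluing step uses no compatibility among the $\beta_i$. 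That is why the argument works in every degree $k\ge1$, not only in degree $1$.

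For degree $0$, the space $H^0_\pi(M,\pi^*E)$ consists of sections $\sigma$ of $\pi^*E$ with $\d_\pi\sigma=0$. In a local trivialization over $V_i$, this says $\sigma$ is locally constant along the fibers. Since the fibers are connected, $\sigma$ is constant on each fiber, so $\sigma=\pi^*\tau$ for a unique section $\tau$ of $E$ over $V_i$, and these $\tau$ patch to a global section. The one point needing care is that $\tau$ is smooth. This follows because $\pi$ admits local sections $s$, so that $\tau=s^*\sigma$ near each point. Conversely, every $\pi^*\tau$ with $\tau\in\Gamma(E)$ is $\d_\pi$-closed, and $\tau\mapsto\pi^*\tau$ is injective because $\pi$ is surjective. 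Together these give $H^0_\pi(M,\pi^*E)=\Gamma(E)$.

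I do not expect a real obstacle. The only point to verify is the compatibility in item 3: the canonical representation on $\pi^*E$ must be the flat fiberwise connection determined by pullback, so that $\d_\pi$ commutes with multiplication by basic functions and is given in any local trivialization of $E$ by the componentwise $\d_\pi$. This holds by construction of the pullback representation.
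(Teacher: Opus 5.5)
Your proof is correct and matches the paper's argument: apply Proposition \ref{prop:buch} componentwise over the trivializing sets $\pi^{-1}(V_i)$, then patch the local primitives with a partition of unity pulled back from $N$ (no correction terms arise since $\d_\pi(\pi^*\chi_i)=0$), and for degree $0$ descend fiberwise-constant sections using connectedness of the fibers. You also fill in details the paper leaves implicit, namely that $\pi^{-1}(V)\to V$ inherits the hypotheses and that the descended section is smooth by local sections of $\pi$.
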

\begin{proof}
	The degree-zero statement follows from connectedness of the fibers:
	a smooth section of $\pi^*E$ that is constant along the fibers
	descends to a section of $E$. For the positive-degree
	statement, choose an open cover of $N$ over which $E$ is trivial.
	The assertion holds over the inverse image of each member of this cover by
	Proposition \ref{prop:buch}, applied componentwise. The local primitives may be
	patched using a partition of unity on $N$.
\end{proof}

\section{The pullback theorem and its extensions}
We now use the vanishing result from the previous section to prove
Theorem \ref{th:main}. After choosing a splitting of the pullback Lie algebroid,
the proof is a standard zig-zag argument. We then discuss some special cases and
extensions.

\subsection{Proof of Theorem \ref{th:main}}
We assume that the fibers of $\pi\colon M\to N$ are connected and have vanishing cohomology in degrees $1\le k\le n$. Let 
$A\Ra N$ be a Lie algebroid, and $E\to N$ a representation of $A$. The Lie algebroid complex of $A$ with coefficients in $E$ is denoted 
\[ \CC^\bullet(A,E)=\Gamma(\wedge^\bullet A^*\otimes E)\]
with the Chevalley-Eilenberg differential $\d_A$. 
Similarly, we have the Lie algebroid complex of $\pi^!A$ with coefficients in $\pi^*E$. The pullback map  
\begin{equation}\label{eq:pistarforms}
\pi^*\colon \CC^\bullet(A,E)\to \CC^\bullet(\pi^!A,\pi^*E)
\end{equation}
is a cochain map. After choosing a splitting of the exact sequence of vector bundles
\[ 0\to T_\pi M\to \pi^!A\to \pi^*A\to 0,\]
we have $\CC^k(\pi^!A,\pi^*E)=\bigoplus_{p+q=k}\CC^{p,q}$ with 
\[ \CC^{p,q}=\Omega^q_\pi(M,\pi^*(\wedge^p A^*\otimes E)).\]
Using the Chevalley-Eilenberg formula for the Lie algebroid differential $\d_{\pi^!A}$ and the decomposition 
$\pi^!A=T_\pi M\oplus \pi^*A$, we see that $\d_{\pi^!A}$ is the sum of 
\[ \d_\pi\colon \CC^{p,q}\to \CC^{p,q+1},\]
and two other summands of bidegrees $(1,0)$ and $(2,-1)$.  
By Corollary \ref{cor:buchcorollary}, the cohomology of $\CC^{p,\bullet}$ vanishes in degree $1\le q\le n$, and is given by 
$\Gamma(\wedge^p A^*\otimes E)$ for $q=0$. 

We now use the standard zig-zag argument: To show that the map 
\begin{equation}\label{eq:pistarcoh}
	 \pi^*\colon H^k(A,E)\to H^k(\pi^!A,\pi^*E)
\end{equation}
is injective for $k\le n+1$, let $\alpha\in \CC^k(A,E)$ be a $\d_A$-cocycle such that $\pi^*\alpha$ is exact. We need to show that $\alpha$ is exact. Write $\pi^*\alpha=\d_{\pi^! A}\beta$ for $\beta\in \CC^{k-1}(\pi^!A,\pi^*E)$. 
It decomposes as 
\[ \beta=\beta_q+\ldots+\beta_0\]
with $\beta_i\in \CC^{k-1-i,i}$. Then $\d_\pi \beta_q=0$. If $q>0$, it follows that $\beta_q=\d_\pi \gamma$
for some element $\gamma\in \CC^{k-1-q,q-1}$. 
Replacing $\beta$ with $\beta-\d_{\pi^!A}\gamma$, we have arranged that $\beta_q=0$, but $\d_\pi \beta_{q-1}=0$. 
Proceeding in this manner, 
we eventually arrive at an element  $\beta\in \CC^{k-1,0}$ with $\d_{\pi^!A}\beta=\pi^*\alpha$. This means in particular $\d_\pi\beta=0$, hence 
$\beta=\pi^*\gamma$ for some $\gamma\in \CC^{k-1}(A,E)$. It follows that 
\[ \pi^*\d_A\gamma=\d_{\pi^!A}\beta=\pi^*\alpha,\]
hence $\d_A\gamma=\alpha$. This shows $\alpha$ is exact, as required. 
Similarly, to see that the map \eqref{eq:pistarcoh} is surjective for $k\le n$, let $\sigma\in  \CC^{k}(\pi^!A,\pi^*E)$ be a cocycle
representing a given cohomology class. Write 
$\sigma=\sigma_q+\ldots+\sigma_0$ with $\sigma_i\in \CC^{k-i,i}$. Then $\d_\pi\sigma_q=0$. If $q>0$, it follows that 
$\sigma_q=\d_\pi\tau$ for some $\tau\in \CC^{k-q,q-1}$. Replacing $\sigma$ with $\sigma-\d_{\pi^!A}\tau$, we have arranged $\sigma^q=0$. 
Proceeding in this manner, we see that the given cohomology class may be represented by an element $\sigma$ in $\CC^{k,0}$. Since 
$\d_\pi\sigma=0$, this element is of the form $\sigma=\pi^*\alpha$ for some $\alpha\in \CC^k(A,E)$, with $\pi^*\d_A\alpha
=\d_{\pi^!A}\pi^*\alpha=\d_{\pi^!A}\sigma=0$, hence $\d_A\alpha=0$. 
 \qed

 \subsection{Special cases and remarks}
 
 \subsubsection{The case $A=TN$}
 We have already mentioned that for the zero Lie algebroid $A=0_N$, with $E=N\times \R$, Theorem \ref{th:main} reduces to Proposition \ref{prop:keycorollary}. Another noteworthy setting is $A=TN$, so that $\pi^!A=TM$. An $A$-representation on a vector bundle $E\to N$ is the same as a flat connection on $E$, and $H^\bullet(A,E)$ is the de Rham cohomology with coefficients in the flat vector bundle $E$. 
 Theorem \ref{th:main}  says that if $\pi\colon M\to N$ has connected fibers with vanishing cohomology in degrees $1\le k\le n$, then the pullback map 
 \[ H^k(N,E)\to H^k(M,\pi^*E)\] 
  is an isomorphism in degrees $0\le k\le n$, and is injective for $k=n+1$. 
  
 \subsubsection{Complex Lie algebroids} 
 	Throughout our discussion, it was tacitly assumed that $A$ is a \emph{real} Lie algebroid. The argument also applies to \emph{complex} Lie algebroids (with anchor map  taking values in the complexified tangent bundle $T^\C N$), with representations on complex vector bundles $E\to N$. 
 	Here the pullback $\pi^!A$ is defined as the fiber product 
 	$\T^\C M\times_{T^\C N} A$, and comes with a Lie algebroid morphism to $A$ with kernel $T_\pi^\C M$. The statement and proof of Theorem \ref{th:main} extend to this setting, keeping in mind that $\CC^\bullet (A,E)$ and $\CC^\bullet(\pi^!A,\pi^*E)$ are now complex vector spaces.

 \subsubsection{Non-Hausdorff bases} 
 Recall that by our standing assumption, all manifolds in this note are required to be Hausdorff. 
 Indeed, the conclusions of Theorem \ref{th:main} and Proposition \ref{prop:keycorollary}
 may fail without this assumption. As an example, let 
 \[
 M=\mathbb R^2\setminus\{(0,0)\}
 \]
 with the foliation given by connected components of the vertical lines. The leaf space $N$ is the real line with doubled origin, and  is a standard example of a non-Hausdorff manifold. The quotient map $\pi\colon M\to N$ is a surjective submersion, with contractible fibers. 
 Nevertheless, $H^1_\pi(M)\neq 0$. A non-trivial class is represented by $\alpha\in \Omega^1_\pi(M)$, given by 
restriction of the angular form 
 \[
 \omega=\frac{x \d y-y\d x}{x^2+y^2}\in\Omega^1(M)
 \]
 to the leaves. Suppose $\alpha=d_\pi g$ for a function $g\in C^\infty(M)$. Then $\omega-\d g$ vanishes on $T_\pi M$, and hence is of the form 
 \[ \omega-\d g=\phi(x,y)\d x\] 
 for some function $\phi\in C^\infty(M)$. Let $\psi(x)=\phi(x,1)$. 
 Since $\omega$ is closed, $\phi$ must be independent of $y$, and so $\phi(x,y)=\psi(x)$ for all $y$. 
 (By continuity, this even holds true for the disconnected line $x=0$.) 
 We find that $\omega=\d g+\psi(x)\d x$ is exact,  contradicting the fact that  $\omega$ represents a generator of $H^1(M)=\R$.

\subsection{The holomorphic setting}

Let $A \Rightarrow N$ be a holomorphic Lie algebroid, with a
representation on a holomorphic vector bundle $E \to N$. Following \cite[Section 6]{ev:tra}, 
one defines the
holomorphic Lie algebroid cohomology 
\[ H^\bullet_{\mathrm{hol}}(A,E)\] 
as the hypercohomology of the complex of sheaves $\mathcal O(\wedge^\bullet A^* \otimes E)$ 
with the Chevalley-Eilenberg differential.  It may alternatively be computed as the cohomology of the total complex of the 
Dolbeault double complex
\[
\Omega^{0,s}
\bigl(
N,\wedge^r A^* \otimes E
\bigr),
\]
with differentials $d_A$ and $\bar\partial$, of bidegrees $(1,0)$ and
$(0,1)$. Laurent-Gengoux, Sti\'enon, and Xu \cite{lau:hol} showed that $T^{0,1}N$ and
$A^{1,0}$ form a matched pair of complex Lie algebroids, and hence combine into a complex Lie algebroid
\[
A_{\mathrm{Dol}}
=
T^{0,1}N \bowtie A^{1,0}
\]
which we shall call the \emph{Dolbeault Lie algebroid}. 
The holomorphic vector bundle $E$ carries a natural representation of $A_{\mathrm{Dol}}$, and its
Chevalley--Eilenberg complex is naturally identified with the total
Dolbeault complex above. Consequently, by \cite[Theorem 4.19]{lau:hol},
\begin{equation}\label{eq:gsx}
	H^\bullet_{\mathrm{hol}}(A,E)
	\cong
	H^\bullet(A_{\mathrm{Dol}},E).
\end{equation}
We next consider the functoriality of this construction. 

\begin{lemma}\label{lem:functorial}
	Let $f\colon M\to N$ be a holomorphic map, transverse to the anchor
	of the holomorphic Lie algebroid $A\Rightarrow N$. Then there is a
	canonical isomorphism of complex Lie algebroids
	\[
	f^!(A_{\mathrm{Dol}})\cong (f^!A)_{\mathrm{Dol}},
	\]
	compatible with the induced representations on
	$f^*E$. The following diagram commutes:
	\[
	\xymatrix@C=4.5em@R=2.5em{
		H^\bullet_{\mathrm{hol}}(A,E)
		\ar[r]^{f^*}
		\ar[d]_{\cong}
		&
		H^\bullet_{\mathrm{hol}}(f^!A,f^*E)
		\ar[d]^{\cong}
		\\
		H^\bullet(A_{\mathrm{Dol}},E)
		\ar[r]_{f^*}
		&
		H^\bullet((f^!A)_{\mathrm{Dol}},f^*E)
	}
	\]
\end{lemma}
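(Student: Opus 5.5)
The plan is to construct the isomorphism fiberwise from the definitions and then check brackets and anchors by reducing to generating sections. Recall that $A_{\mathrm{Dol}}=T^{0,1}N\oplus A^{1,0}$ as a complex vector bundle. Its anchor is the sum of the inclusion $T^{0,1}N\hookrightarrow T^\C N$ and the complexified anchor $A^{1,0}\to T^{1,0}N\subset T^\C N$. The bracket is determined by three ingredients: the bracket of antiholomorphic vector fields, the holomorphic bracket on $A$ (extended to $A^{1,0}$), and the mixed terms. These mixed terms are given by the $\bar\partial$-operator, i.e.\ the $T^{0,1}N$-module structure on $A^{1,0}$, together with the $A^{1,0}$-action on $T^{0,1}N$ defined via the anchor.

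Similarly, $(f^!A)_{\mathrm{Dol}}=T^{0,1}M\oplus (f^!A)^{1,0}$, where $(f^!A)^{1,0}=T^{1,0}M\times_{T^{1,0}N}A^{1,0}$, because $f^!A=TM\times_{TN}A$ as holomorphic bundles and complexification commutes with fiber products. On the other side,
\[
f^!(A_{\mathrm{Dol}})=T^\C M\times_{T^\C N}\bigl(T^{0,1}N\oplus A^{1,0}\bigr).
\]
Splitting $T^\C M=T^{0,1}M\oplus T^{1,0}M$ and using that $Tf$ preserves types ($f$ being holomorphic), an element $(v,\xi+a)$ with $Tf(v)=\xi+\varrho(a)$ decomposes as $v=v^{0,1}+v^{1,0}$ with $Tf(v^{0,1})=\xi$ and $Tf(v^{1,0})=\varrho(a)$. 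So $\xi$ is determined by $v^{0,1}$, and we get a canonical bundle isomorphism
\[
(v,\xi+a)\mapsto v^{0,1}+(v^{1,0},a),
\]
which obviously intertwines the anchors. Transversality is needed only to know that both sides are smooth vector bundles of the same rank.

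Next, I would verify compatibility of brackets. A Lie algebroid bracket is determined by the anchor and by brackets on a set of local sections that span the fibers, via the Leibniz rule. The local spanning sections I would use are:
\begin{itemize}
\item local antiholomorphic vector fields on $M$, i.e.\ sections $\bar\partial_{\bar z_j}$, which lie in $T^{0,1}M$;
\item pairs $(X,\sigma)$ with $X$ a holomorphic vector field and $\sigma$ a holomorphic section of $A$ such that $Tf(X)=\varrho(\sigma)\circ f$. These span $(f^!A)^{1,0}$ locally, since $f^!A$ is a holomorphic Lie algebroid, and hence locally framed by holomorphic sections, which are exactly such pairs.
\end{itemize}
On both sides, brackets of two holomorphic pairs are computed componentwise. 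Brackets of antiholomorphic fields with holomorphic pairs vanish: in $A_{\mathrm{Dol}}$ the $T^{0,1}$-action on holomorphic sections is zero, and antiholomorphic and holomorphic coordinate fields commute. Brackets between antiholomorphic fields agree trivially. Since both brackets are componentwise restrictions of $TM\times A_{\mathrm{Dol}}$-type brackets satisfying Leibniz with the same anchor, they coincide. The representation on $f^*E$ is handled the same way, by testing on holomorphic frames of $f^*E$ pulled back from $E$: $T^{0,1}M$ kills them, and holomorphic pairs act through the $A$-action.

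Finally, for the diagram, I would note that the vertical isomorphisms of \cite{lau:hol} come from the inclusion of the holomorphic Chevalley--Eilenberg complex of sheaves into the sheafified Dolbeault complex (a resolution). Pullback under $f$ is compatible with these sheaf inclusions at the cochain level, and under the isomorphism above the pullback of Dolbeault cochains is just $f^*$ of forms. Naturality of hypercohomology then gives commutativity. I expect the main obstacle to be the bracket check, specifically making sure the mixed terms in the matched-pair bracket are correctly captured by the spanning-section argument. A cleaner alternative I would try first is to characterize $A_{\mathrm{Dol}}$ intrinsically as the unique complex Lie algebroid structure on $T^{0,1}N\oplus A^{1,0}$ such that $T^{0,1}N$ and holomorphic sections of $A$ are subalgebras commuting with each other, and then to check that $f^!$ preserves this characterization.
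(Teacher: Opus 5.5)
\textbf{The overall route matches the paper; one step of your bracket check fails as written.} Your bundle map $(v,\xi+a)\mapsto v^{0,1}+(v^{1,0},a)$ is exactly the paper's. So is your treatment of the commuting square: both cochain maps are pullback of Dolbeault forms, compatible with the resolutions. The paper simply asserts that bracket compatibility follows from the definitions. The step that fails is your choice of spanning sections. Pairs $(X,\sigma)$ with $\sigma$ a holomorphic section of $A$ over $N$ and $Tf(X)=\varrho(\sigma)\circ f$ are not ``exactly'' the holomorphic sections of $f^!A$. A holomorphic section of $f^!A$ has second component an arbitrary holomorphic section of $f^*A$. Moreover, for non-submersive $f$ such pairs need not span the fibers at all. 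For example, take $M=N=\C$ and $A=TN$, so every $f$ is transverse to the anchor, and let $f(z)=z^2$. Then $(f^!A)_0=T_0M\times\{0\}\neq 0$. But an $f$-related pair $(h(z)\partial_z,\,s(w)\partial_w)$ requires $2zh(z)=s(z^2)$. This forces $h$ to be odd, so every such pair vanishes at $0$. Checking brackets on this set therefore does not determine the bracket of $f^!A$, and the same issue affects your proposed intrinsic characterization.

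\textbf{How to repair it.} Use all local holomorphic sections of $f^!A$, namely pairs $(X,\sum_k h_k\,f^*\sigma_k)$ with $h_k$ holomorphic on $M$ and $\{\sigma_k\}$ a local holomorphic frame of $A$. Compute their brackets in $f^!(A_{\mathrm{Dol}})$ with the pullback-algebroid formula, which has the extra terms $\sum_l X(g_l)\,f^*\tau_l-\sum_k Y(h_k)\,f^*\sigma_k$. Similarly, the section of $f^!(A_{\mathrm{Dol}})$ corresponding to $\partial_{\bar z_j}$ has second component $\sum_i \overline{\partial f_i/\partial z_j}\;f^*\partial_{\bar w_i}$, with non-constant antiholomorphic coefficients. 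All the extra terms are handled by type. Vectors of type $(0,1)$ kill holomorphic coefficients, and vectors of type $(1,0)$ kill antiholomorphic ones. On holomorphic coefficients, $(1,0)$-vectors act by the holomorphic derivative. Finally, $[\partial_{\bar w_i},\sigma_k]_{A_{\mathrm{Dol}}}=0$ for holomorphic $\sigma_k$. Hence the brackets agree with those of $(f^!A)_{\mathrm{Dol}}$, and the same reasoning handles the representation on $f^*E$. In the submersion case used for the main theorem, $f$-related pairs do span (vertical fields plus local lifts), so your version already suffices there. It does not suffice for the lemma as stated, for general $f$ transverse to the anchor.
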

\begin{proof}
	Let 
\[ 		(v,(w,a))\in  f^!(A_{\mathrm{Dol}})\subset T_\C M\times f^*(T^{0,1}N\oplus A^{1,0}).\]
Thus $T_{\mathbb C}f(v)=w+\varrho(a)$. 
Decomposing into holomorphic and anti-holomorphic parts, and using that 
$f$ is holomorphic, we have that
	\[
	w=(T^{0,1}f)(v^{0,1}),
	\qquad
	(T^{1,0}f)(v^{1,0})=\varrho(a).
	\]
	Consequently, the map taking $(v,(w,a))$ to $\bigl(v^{0,1},(v^{1,0},a)\bigr)$ defines an isomorphism of complex  
	vector bundles
	\[
	f^!(A_{\mathrm{Dol}})
	\to 
	T^{0,1}M\oplus(f^!A)^{1,0}
	=(f^!A)_{\mathrm{Dol}}.
	\]
	The definitions of the Dolbeault Lie algebroid structures show that this is an isomorphism of
	complex Lie algebroids. It also intertwines the induced
	representations on $f^*E$.
	
	Under the identification of the Chevalley--Eilenberg complexes of the
	Dolbeault Lie algebroids with the corresponding total Dolbeault
	complexes, the induced cochain maps are both given by pullback.
	The diagram therefore commutes.
\end{proof}

Specializing to the case of submersions, we arrive at the following result. 

\begin{theorem}
Let $\pi\colon M\to N$ be a holomorphic submersion with connected
fibers whose ordinary de Rham cohomology vanishes in degrees
$1\leq k\leq n$.
Let $A\Ra N$ be a holomorphic Lie algebroid, with a representation on a holomorphic vector bundle $E\to N$. Then 
the pullback map 
\[ \pi^*\colon H^\bullet_{\mathrm{hol}}(A,E)\to H^\bullet_{\mathrm{hol}}(\pi^!A,\pi^*E)\]
is an isomorphism in degrees $0\le k\le n$, and is injective in degree $n+1$. 
\end{theorem}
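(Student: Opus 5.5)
The plan is to reduce the statement to the smooth (complex) version of Theorem \ref{th:main}, using Lemma \ref{lem:functorial} and the identification \eqref{eq:gsx}. Since the commutative square of Lemma \ref{lem:functorial} has vertical isomorphisms, it suffices to show that
\[
\pi^*\colon H^\bullet(A_{\mathrm{Dol}},E)\to H^\bullet((\pi^!A)_{\mathrm{Dol}},\pi^*E)
\]
is an isomorphism in degrees $0\le k\le n$ and injective in degree $n+1$. By Lemma \ref{lem:functorial}, applied with $f=\pi$, we have $(\pi^!A)_{\mathrm{Dol}}\cong \pi^!(A_{\mathrm{Dol}})$ compatibly with the representations on $\pi^*E$. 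The map above is therefore the pullback map for the complex Lie algebroid $A_{\mathrm{Dol}}\Ra N$ along the surjective submersion $\pi$.

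The main step is to apply the complex-Lie-algebroid version of Theorem \ref{th:main}, discussed in the remark on complex Lie algebroids above. Its proof needs two inputs. The first is a splitting of $0\to T^\C_\pi M\to \pi^!(A_{\mathrm{Dol}})\to \pi^*A_{\mathrm{Dol}}\to 0$, which exists as a smooth complex vector bundle splitting. The second is the vanishing of $H^q_\pi(M,\pi^*F)$ for $1\le q\le n$, together with $H^0_\pi(M,\pi^*F)=\Gamma(F)$, for complex vector bundles $F\to N$ and with complexified leafwise forms. These follow from Corollary \ref{cor:buchcorollary} by splitting into real and imaginary parts, or by viewing $F$ as a real vector bundle. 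The hypotheses apply because the fibers of $\pi$ are connected with vanishing real de Rham cohomology in degrees $1\le k\le n$, which is exactly what the theorem assumes. Note that the relevant kernel is the full complexified vertical bundle $T^\C_\pi M$, including both the $(1,0)$ and $(0,1)$ parts. So ordinary leafwise de Rham cohomology is the correct input, and no fiberwise Dolbeault vanishing is needed. The zig-zag argument then transfers verbatim.

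I expect the only real point of care to be in Lemma \ref{lem:functorial}. One must check that the kernel of $\pi^!(A_{\mathrm{Dol}})\to A_{\mathrm{Dol}}$ really is all of $T^\C_\pi M$. Concretely, the elements $(v,(0,0))$ with $T_\C\pi(v)=0$ account for all vertical complex vectors, both $v^{1,0}$ and $v^{0,1}$. Transversality, needed to form $\pi^!(A_{\mathrm{Dol}})$, is automatic since $\pi$ is a submersion. The conclusion then follows by chasing the commutative diagram.
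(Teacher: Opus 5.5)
Your proposal is correct and follows the paper's argument. You use \eqref{eq:gsx} and Lemma~\ref{lem:functorial} to identify the holomorphic pullback map with the pullback $H^\bullet(A_{\mathrm{Dol}},E)\to H^\bullet(\pi^!(A_{\mathrm{Dol}}),\pi^*E)$, then invoke the complex-Lie-algebroid version of Theorem~\ref{th:main}; your extra details (a smooth complex splitting, complex coefficients handled by applying Corollary~\ref{cor:buchcorollary} to the underlying real bundle, and the kernel being all of $T^\C_\pi M$, so only ordinary fiberwise de Rham cohomology enters) are what the paper's brief remark on complex Lie algebroids leaves implicit.
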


\begin{proof}
Using the isomorphism \eqref{eq:gsx}, together with the functoriality statement from Lemma \ref{lem:functorial}, this 
is equivalent to the statement that $\pi^*\colon H^\bullet(A_{\on{Dol}},E)\to  H^\bullet(\pi^!(A_{\on{Dol}}),\pi^*E)$ 
is an isomorphism in these degrees. But this is just Theorem \ref{th:main} for the case of complex Lie algebroids. 
\end{proof}


\bibliographystyle{amsplain} 
\def\cprime{$'$} \def\polhk#1{\setbox0=\hbox{#1}{\ooalign{\hidewidth
			\lower1.5ex\hbox{`}\hidewidth\crcr\unhbox0}}} \def\cprime{$'$}
\def\cprime{$'$} \def\cprime{$'$} \def\cprime{$'$} \def\cprime{$'$}
\def\polhk#1{\setbox0=\hbox{#1}{\ooalign{\hidewidth
			\lower1.5ex\hbox{`}\hidewidth\crcr\unhbox0}}} \def\cprime{$'$}
\def\cprime{$'$} \def\cprime{$'$} \def\cprime{$'$} \def\cprime{$'$}
\providecommand{\bysame}{\leavevmode\hbox to3em{\hrulefill}\thinspace}
\providecommand{\MR}{\relax\ifhmode\unskip\space\fi MR }
\providecommand{\MRhref}[2]{%
	\href{http://www.ams.org/mathscinet-getitem?mr=#1}{#2}
}
\providecommand{\href}[2]{#2}

\end{document}